\newtheorem{theorem}{Theorem}[section]
\newtheorem{lemma}[theorem]{Lemma}
\newtheorem{corollary}[theorem]{Corollary}
\newtheorem{remark}[theorem]{Remark}
\newtheorem{proposition}[theorem]{Proposition}
\newtheorem{definition}[theorem]{Definition}
\newcommand{\CC}{C_k}
\newcommand{\GG}{\mathfrak{G}}
\newcommand{\dd}{(\mathbf{D})}
\newcommand{\w}{\omega}
\newcommand{\FF}{\mathcal{F}}
\newcommand{\KK}{\mathcal{K}}
\newcommand{\Nn}{\mathcal{N}}
\newcommand{\Pp}{\mathfrak{P}}
\renewcommand{\phi}{\varphi}
\newcommand{\U}{\mathcal U}
\newcommand{\cl}{\mathrm{cl}}
\newcommand{\gglim}{\mbox{g-}\underrightarrow{\lim}\,}
\newcommand{\glim}{\mbox{{\em g}-}\underrightarrow{\lim}\,}
\title[Topological properties of inductive limits]{Topological properties of inductive limits of \\ closed towers of mertrizable groups}
\author[S.~Gabriyelyan]{Saak Gabriyelyan}
\address{Department of Mathematics, Ben-Gurion University of the Negev, Beer-Sheva, P.O. 653, Israel}
\email{saak@math.bgu.ac.il}
\subjclass[2000]{22A05, 54H11}
\keywords{inductive limit, metrizable group, $\GG$-base,  $\aleph$-space, Fr\'{e}chet--Urysohn, sequential, Ascoli}
\begin{document}

\begin{abstract}
Let  $\{ G_n\}_{n\in\w}$ be a closed tower of metrizable groups.
Under a mild condition called  $(GC)$  and which is strictly weaker than $PTA$ condition introduced in \cite{TSH}, we show that: (1) the inductive limit $G=\gglim G_n$ of the tower is a Hausdorff group, (2) every $G_n$ is a closed subgroup of $G$, (3) if $K$ is a compact subset of $G$, then $K\subseteq G_m$ for some $m\in\w$, (4)  $G$ has a $\GG$-base and  countable tightness, (5) $G$ is an $\aleph$-space, (6) $G$ is an Ascoli space if and only if either (i) there is  $m\in\w$ such that $G_n$ is open in $G_{n+1}$ for every $n\geq m$, so $G$ is metrizable; or (ii) all the groups $G_n$ are locally compact and $G$ is a sequential non-Fr\'{e}chet--Urysohn space.
\end{abstract}

\maketitle




\section{Introduction}


Let $\{ (G_n,\tau_n)\}_{n\in\w}$ be a tower of topological groups, i.e., $G_n$ is a subgroup of $G_{n+1}$ for every $n\in\w:=\{ 0,1,2,\dots\}$.  The {\em inductive limit $\glim G_n$} of the tower is the union $G=\bigcup_{n\in\w} G_n$ endowed with the finest (not necessarily Hausdorff) group topology $\tau_{gr}$ such that all the identity inclusions $G_n\to G$ are continuous. Besides the topology $\tau_{gr}$, the union $G$ carries the topology $\tau_{ind}$ of the inductive limit of  $\{ G_n\}_{n\in\w}$ in the category of topological spaces and continuous mappings.  Tatsuuma, Shimomura and Hirai showed in \cite{TSH} by two counterexamples that $\tau_{ind}$ is not necessarily a group topology for $G$. However, for the important case when all $G_n$ are locally compact, they proved that  $\tau_{ind}$ is indeed a group topology. If additionally all the groups $G_n$ are  metrizable and the  tower $\{ G_n\}_{n\in\w}$ is  {\em closed} (i.e., $G_n$ is a closed subgroup of $G_{n+1}$ for all $n\in\w$), Yamasaki \cite{Yamasaki} proved the converse assertion by showing that if $\tau_{ind}$ is a group topology, then  all the groups $G_n$ are locally compact (we reprove this result in Theorem \ref{t:Ascoli-inductive-1} below).
This pathological phenomena rises the problem of explicit description of the topological structure of the inductive limit $\gglim G_n$. This problem was discussed in \cite{Edamatsu,HSTH,TSH}.

The most natural construction of a topology on $G$ which takes into account the group topology $\tau_{gr}$ on $G$ is the so called {\em Bamboo-Shoot topology} $\tau_{BS}$ defined in \cite{TSH}.
For every $n\in\w$, denote by $\Nn^s_n$ the family of all $\tau_n$-open symmetric neighborhoods of the identity $e\in G_n$ and set $\mathfrak{N} := \left\{ (U_n)_{n\in\w} \in \prod_{n\in\w} \Nn^s_n \right\}$.
For every sequence $(U_n)_{n\in\w} \in \mathfrak{N}$ and each $k\in\w$ such that  $k\leq n$, set
\[
\begin{aligned}
U^+(n,k) & := U_k\cdot U_{k+1}\cdots U_n,\\
U(n,k) & := \big( U^+(n,k)\big)^{-1} \cdot U^+(n,k)= U_n\cdots U_{k+1}\cdot U_k^2\cdot U_{k+1}\cdots U_n ,\\
\mathrm{U}^+[k] & := \bigcup_{n=k}^\infty U^+(n,k) = \bigcup_{n=k}^\infty U_k\cdot U_{k+1}\cdots U_n,\\
\mathrm{U}[k] & := \big( \mathrm{U}^+[k] \big)^{-1} \cdot \mathrm{U}^+[k] = \bigcup_{n=k}^\infty U_n\cdots U_{k+1}\cdot U_k^2\cdot U_{k+1}\cdots U_n,
\end{aligned}
\]
the set $\mathrm{U}[k]$ is called a {\em Bamboo-Shoot} ($BS$ for short) neighborhood of $e\in G$. Then the families
\[
\U_l :=\{ g\mathrm{U}[k]: \mathrm{U}[k]\in \Nn_{BS}^s\}\; \mbox{ and }\; \U_r :=\{ \mathrm{U}[k]g : \mathrm{U}[k]\in \Nn_{BS}^s\},
\]
where $\Nn_{BS}^s := \left\{ \mathrm{U}[k]: (U_n)_{n\in\w} \in \mathfrak{N}, k\in\w \right\}$, are open basis of the same topology $\tau_{BS}$ on $G$ called the {\em Bamboo-Shoot topology}. The group $G$ with $\tau_{BS}$ is a  (maybe non-Hausdorff) quasitopological group (see \cite{TSH} or \cite{BR}).
It is easy to see that $\tau_{gr}\leq \tau_{BS} \leq \tau_{ind}$, so the following  natural question is of interest: Under which conditions the Bamboo-Shoot topology $\tau_{BS}$ is a {\em group} topology (in this case clearly $\tau_{gr}= \tau_{BS}$)?  Tatsuuma, Shimomura and Hirai \cite{TSH}  showed that if the tower $\{ G_n\}_{n\in\w}$  satisfies the ``$PTA$ condition'' (see Definition \ref{def:PTA-condition} below), then $\tau_{BS}$ is a group topology on $G$ and hence $(G,\tau_{BS})=\gglim G_n$. Banakh and Repov\v{s} \cite{BR} showed that $\tau_{BS}$ is a group topology if the tower satisfies the ``balanced condition'' (see Definition \ref{def:balanced-condition}). In Section \ref{sec:GC-condition} we study towers of topological groups which satisfy the following condition.

\begin{definition} \label{def:GC-condition} {\em
A tower $\{ G_n\}_{n\in\w}$ of topological groups satisfies the {\em Group Condition} ($(GC)$ for short) if for every $(U_n)_{n\in\w} \in \mathfrak{N}$ and $k\in\w$ there is $(V_n)_{n\in\w} \in \mathfrak{N}$ such that $
\mathrm{V}[k] \subseteq \mathrm{U}^+[k]$.}
\end{definition}

It is easy to show (Proposition \ref{p:BS-GC}) that if a tower  $\{ G_n\}_{n\in\w}$ of topological groups satisfies $(GC)$, then  $\tau_{BS}$ is indeed a group topology. Moreover,  results of \cite{BR} and \cite{TSH} show that the condition $(GC)$ is strictly weaker than the $PTA$ condition and the balanced condition, see Remark \ref{rem:PTA-balanced-GC}. In Theorem \ref{t:IL-properties-compact}, which is the main result of Section \ref{sec:GC-condition},  we show that if a closed  tower $\{ G_n\}_{n\in\w}$  of Hausdorff topological groups satisfies $(GC)$, then the Bamboo-Shoot topology $\tau_{BS}$ is a Hausdorff group topology on $G=\bigcup_{n\in\w} G_n$ such that $G_n$ is a closed subgroup of $(G,\tau_{BS})$. Moreover, if $K$ is a compact subset of $(G,\tau_{BS})$, then $K\subseteq G_m$ for some $m\in\w$. These results are essential for the study of topological properties of inductive limits of towers of {\em metrizable} groups.

Let a closed tower $\{ G_n\}_{n\in\w}$  of metrizable groups satisfy $(GC)$.  If all the groups $G_n=E_n$ are Fr\'{e}chet (=complete metrizable locally convex) spaces, then the Bamboo-Shoot topology $\tau_{BS}$ coincides also with the inductive limit topology in the category of locally convex spaces and continuous linear mappings (see Proposition 3.1 of \cite{HSTH}). In this case the inductive limit $E=\gglim E_n$ is called a {\em strict $(LF)$-space}.
In  \cite{CKS-2002} it is proved that every strict $(LF)$-space $E$ has countable tightness. Moreover, we proved in \cite{GKL2} that $E$ has even the Pytkeev property. Recall that a topological space $X$ has the {\em Pytkeev property} if for each $A\subseteq X$ and  each $x\in \overline{A}\setminus A$, there are infinite subsets $A_1, A_2, \dots $ of $A$ such that each neighborhood of $x$ contains some $A_n$. Every sequential space is Pytkeev.
In \cite{kaksax}, K\c{a}kol and Saxon showed that a strict $(LF)$-space $E$ is a $k$-space if and only if $E$ is sequential if and only if $E$ is Fr\'{e}chet or is $\phi$ (where $\phi$ is the inductive limit of finite dimensional vector spaces, so as we mentioned above, $\tau_{BS}=\tau_{ind}$). This result was essentially strengthened in \cite{Gab-LF} by showing that a strict $(LF)$-space $E$ is an Ascoli space if and only if $E$ is Fr\'{e}chet or is $\phi$. Let us recall (see \cite{BG}) that a Tychonoff space $X$ is an {\em Ascoli space} if and only if every compact subset of $\CC(X)$ is equicontinuous, where $\CC(X)$ is the space $C(X)$ of all real-valued functions on $X$ endowed with the compact-open topology. By the classical Ascoli theorem, every $k$-space is an Ascoli space.

Besides the sequential and compact type properties mentioned in the previous paragraph, there are important topological properties coming for example from the generalized metric space theory. We recall only two the most important properties. Let $\Nn$ be a family  of subsets of a topological space $X$. Following \cite{Arhan}, the family $\Nn$ is a {\em network at a point} $x\in X$ if for each neighborhood $O_x$ of $x$ there is a set $N\in\mathcal{N}$ such that $x\in N\subseteq O_x$; $\Nn$ is a {\em network} in $X$ if $\mathcal{N}$ is a network at each point $x\in X$. Following Michael \cite{Mich}, $\Nn$ is a {\em $k$-network} in $X$ if whenever $K\subseteq U$ with $K$ compact and $U$ open in $X$, then $K\subseteq \bigcup\FF\subseteq U$ for some finite $\FF\subseteq\Nn$.
Okuyama \cite{Oku} and O'Meara \cite{OMe2}, having in mind the Nagata--Smirnov metrization theorem, introduced the classes of $\sigma$-spaces and $\aleph$-spaces. A topological space $X$ is called a {\em $\sigma$-space} (respectively, an {\em $\aleph$-space}) if $X$ is regular and has a $\sigma$-locally finite (respectively, $k$-)network.

Being motivated by the study of $(DF)$-spaces, $C(X)$-spaces and spaces in  the class $\GG$ in the sense of Cascales and Orihuela, the concept of a $\GG$-base has been formally introduced in \cite{FKLS} in the realm of locally convex spaces. In a more general situation of topological groups this concept was introduced and thoroughly studied in \cite{GKL}. A topological group $G$ has a {\em $\GG$-base} if it has a base $\{ U_\alpha : \alpha\in\w^\w\}$ of neighborhoods at the identity $e\in G$ such that $U_\beta \subseteq U_\alpha$ whenever $\alpha\leq\beta$ for all $\alpha,\beta\in\w^\w$, where $\alpha=(\alpha(n))_{n\in\w}\leq \beta=(\beta(n))_{n\in\w}$ if $\alpha(n)\leq\beta(n)$ for all $n\in\w$. It is easy to see that every metrizable group has a $\GG$-base. It is also known that every  strict $(LF)$-space has a $\GG$-base, see \cite{CKS}.

The following diagram describes the relation between the aforementioned properties:
\[
\xymatrix{
\mbox{metric} \ar@{=>}[r]\ar@{=>}[d] \ar@{=>}[rrd]  &  {\substack{\mbox{Fr\'{e}chet-} \\ \mbox{Urysohn}}} \ar@{=>}[r] &  \mbox{sequential} \ar@{=>}[r]\ar@{=>}[rd] &  \mbox{$k$-space}  \ar@{=>}[r] & \mbox{Ascoli} \\
\mbox{$\aleph$-space}  \ar@{=>}[r] & \mbox{$\sigma$-space}  & {\substack{\mbox{$\GG$-base} \\ \mbox{with $\dd$ }}} \ar@{=>}[r] & {\mbox{Pytkeev}} \ar@{=>}[r] & {\substack{\mbox{countably} \\ \mbox{tight}}}
}
\]
(the condition $\dd$ will be defined in the beginning of Section \ref{sec:main}). None of these implications is reversible, see \cite{BG,Eng,GK-GMS1,Mich}.

The above  mentioned  results  motivate the study of topological properties of inductive limits of closed tower $\{ G_n\}_{n\in\w}$  of metrizable groups. This is the main theme of the paper. In Section \ref{sec:main}, under the assumption that  $\{ G_n\}_{n\in\w}$ satisfies $(GC)$, we show that the inductive limit $G=\gglim G_n$ has the following properties: (1)  $G$ has a $\GG$-base which satisfies $\dd$ and is an $\aleph$-space (Theorem \ref{t:Ascoli-inductive-1}), and (2) $G$ is an Ascoli space if and only if either (i) there is  $m\in\w$ such that $G_n$ is open in $G_{n+1}$ for every $n\geq m$, so $G$ is metrizable; or (ii) all the groups $G_n$ are locally compact and $G$ is a sequential non-Fr\'{e}chet--Urysohn space (Theorem \ref{t:IS-Pytkeev-G-base}).

\section{Topological properties of inductive limits of towers of topological groups} \label{sec:GC-condition}


Recall that a group $G$ endowed with a topology $\tau$ is a {\em semitopological group} if the multiplication $m:G\times G\to G, (g,h)\mapsto gh$, is separately continuous; $(G,\tau)$ is a {\em quasitopological group} if it is a semitopological group with continuous inversion $(\cdot)^{-1}: G\to G, g\mapsto g^{-1}$.

\begin{lemma} \label{l:BS-description}
Let $\{ G_n\}_{n\in\w}$ be a tower of topological groups. Then:
\begin{enumerate}
\item[{\rm (i)}] {\em (\cite{TSH})} $\tau_{BS} \leq \tau_{ind}$.
\item[{\rm (ii)}] {\em (\cite{BR})} The identity map $(G,\tau_{BS})\to \glim G_n$ is continuous. Consequently, if $\tau_{BS}$ is a group topology on $G$, then $(G,\tau_{BS})= \glim G_n$.
\item[{\rm (iii)}] {\em (\cite{TSH})}  $(G,\tau_{ind})$ and  $(G,\tau_{BS})$ are quasitopological groups.
\end{enumerate}
\end{lemma}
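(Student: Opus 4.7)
I would address the three parts in the order (i), (iii), (ii), since the first two are essentially definition-chasing while (ii) carries the real content. For (i), the plan is to exploit the universal property of $\tau_{ind}$: as the finest topology on $G$ making every inclusion $G_n\hookrightarrow G$ continuous, the inequality $\tau_{BS}\leq\tau_{ind}$ is equivalent to each inclusion $G_n\hookrightarrow(G,\tau_{BS})$ being continuous. Since $\U_l$ is a basis for $\tau_{BS}$ and left translation by an element of $G_n$ is a $\tau_n$-homeomorphism, this reduces to showing $\mathrm{U}[k]\cap G_n$ is a $\tau_n$-neighborhood of $e$ for every $(U_j)_{j\in\w}\in\mathfrak N$ and every $k,n\in\w$. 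When $n\geq k$, $U(n,k)=U_n\cdots U_{k+1}U_k^2U_{k+1}\cdots U_n$ sits in $G_n\cap\mathrm{U}[k]$ and is $\tau_n$-open because its leftmost factor $U_n$ is $\tau_n$-open in the topological group $G_n$; when $n<k$, continuity of $G_n\hookrightarrow G_k$ makes $U_k^2\cap G_n\subseteq\mathrm{U}[k]\cap G_n$ a $\tau_n$-open neighborhood of $e$.

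For (iii), separate continuity of multiplication in $\tau_{BS}$ is immediate from having both $\U_l$ and $\U_r$ as bases, and symmetry of $\mathrm{U}[k]=(\mathrm{U}^+[k])^{-1}\mathrm{U}^+[k]$ shows that inversion sends $g\mathrm{U}[k]\in\U_l$ to $\mathrm{U}[k]g^{-1}\in\U_r$, which is basic. In $\tau_{ind}$ I would check translations and inversion piecewise over the $G_n$: for $g\in G_\ell$ and $\tau_{ind}$-open $W$, when $n\geq\ell$ one has $g^{-1}W\cap G_n=g^{-1}(W\cap G_n)\in\tau_n$ by left translation in $G_n$, and when $n<\ell$ continuity of the map $G_n\to G_\ell$, $x\mapsto gx$, against $W\cap G_\ell\in\tau_\ell$ does the job. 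Inversion is handled identically via $W^{-1}\cap G_n=(W\cap G_n)^{-1}$.

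For (ii), the target is $\tau_{gr}\leq\tau_{BS}$, and by translation invariance it suffices to show every $\tau_{gr}$-neighborhood $V$ of $e$ contains some $\mathrm{U}[k]$. Since $\tau_{gr}$ is a group topology, I recursively select symmetric $\tau_{gr}$-open neighborhoods $V_0\supseteq V_1\supseteq\cdots$ of $e$ with $V_0^2\subseteq V$ and $V_{j+1}^2\subseteq V_j$ for every $j\geq 0$. Fixing any $k\in\w$, I take $U_m\in\Nn^s_m$ arbitrary for $m<k$ and, for $j\geq 0$, a symmetric $\tau_{k+j}$-open $U_{k+j}\subseteq V_{j+1}\cap G_{k+j}$ (well defined because $V_{j+1}\cap G_{k+j}$ is $\tau_{k+j}$-open by continuity of $G_{k+j}\hookrightarrow(G,\tau_{gr})$). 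The standard telescoping bound $V_1V_2\cdots V_N\subseteq V_0$ (from $V_{j+1}^2\subseteq V_j$) then gives $U^+(n,k)\subseteq V_0$, and symmetry of the $V_j$'s yields $U(n,k)=U^+(n,k)^{-1}U^+(n,k)\subseteq V_0^2\subseteq V$ for every $n\geq k$, whence $\mathrm{U}[k]\subseteq V$. The consequence is immediate: if $\tau_{BS}$ is a group topology, then by (i) it makes every inclusion continuous, so $\tau_{BS}\leq\tau_{gr}$ by maximality of $\tau_{gr}$, and combined with the just-proved reverse inequality this forces $(G,\tau_{BS})=\glim G_n$.

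I expect the main obstacle to be the bookkeeping in (ii): aligning the indexing of the bamboo-shoot factors (which begin at $U_k$ in $U^+[k]$) with the recursion indexing of the $V_j$'s (which begins at $V_0$), and verifying the telescoping estimate for the specific symmetric order of factors in $U(n,k)$. Parts (i) and (iii) then reduce to clean definition-level checks once the universal property of $\tau_{ind}$ and the symmetry of bamboo-shoots are in hand.
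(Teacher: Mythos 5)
Your proposal is correct and follows essentially the same route as the paper's proof: parts (i) and (iii) are the same definition-level checks using the basic properties of $\Nn_{BS}^s$ (your variant of (i), exhibiting $U(n,k)$ directly as a $\tau_n$-open subset of $\mathrm{U}[k]\cap G_n$ after translating to $e$, is only cosmetically different from the paper's use of property (iv) at a general point), and for (ii) your telescoping argument with symmetric $\tau_{gr}$-neighborhoods satisfying $V_{j+1}^2\subseteq V_j$, pulled back to $\tau_{k+j}$-open sets $U_{k+j}$, is exactly the paper's construction.
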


Our interest in the condition $(GC)$ is explained by the following assertion.

\begin{proposition} \label{p:BS-GC}
If a tower $\{ G_n\}_{n\in\w}$ of topological groups satisfies $(GC)$, then $\tau_{BS}$ is a  (maybe non-Hausdorff) group topology on $G:=\bigcup_{n\in\w} G_n$. Thus $(G,\tau_{BS})= \glim (G_n,\tau_n)$.
\end{proposition}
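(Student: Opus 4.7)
The plan is to exploit Lemma~\ref{l:BS-description}(iii), which already gives us that $(G,\tau_{BS})$ is a quasitopological group: multiplication is separately continuous and inversion is continuous. Since left and right translations are therefore homeomorphisms, upgrading $\tau_{BS}$ to a (possibly non-Hausdorff) group topology amounts to verifying joint continuity of multiplication at the identity $(e,e)$. In terms of the base $\Nn^s_{BS}$, this reduces to the following single assertion: for every $(U_n)_{n\in\w}\in\mathfrak{N}$ and every $k\in\w$, there exists a $BS$-neighborhood $\mathrm{W}$ of $e$ with $\mathrm{W}\cdot\mathrm{W}\subseteq \mathrm{U}[k]$.

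The main step is to produce $\mathrm{W}$ directly from the $(GC)$ condition. Given $(U_n)_{n\in\w}$ and $k$, apply $(GC)$ to obtain $(V_n)_{n\in\w}\in\mathfrak{N}$ with
\[
\mathrm{V}[k]\;\subseteq\;\mathrm{U}^+[k].
\]
The key observation is that $\mathrm{V}[k]$ is symmetric by construction: indeed, from the definition
\[
\mathrm{V}[k]=\bigl(\mathrm{V}^+[k]\bigr)^{-1}\cdot\mathrm{V}^+[k],
\]
one sees at once that $\mathrm{V}[k]^{-1}=\mathrm{V}[k]$. Consequently, taking inverses in the inclusion above yields $\mathrm{V}[k]=\mathrm{V}[k]^{-1}\subseteq \bigl(\mathrm{U}^+[k]\bigr)^{-1}$, so that
\[
\mathrm{V}[k]\cdot \mathrm{V}[k]\;\subseteq\;\bigl(\mathrm{U}^+[k]\bigr)^{-1}\cdot\mathrm{U}^+[k]\;=\;\mathrm{U}[k].
\]
Thus $\mathrm{W}:=\mathrm{V}[k]$ works, and we conclude that multiplication is continuous at $(e,e)$ with respect to $\tau_{BS}$. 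Combined with the continuity of inversion and translations, this proves that $\tau_{BS}$ is a group topology on $G$.

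Finally, to derive the identification $(G,\tau_{BS})=\glim (G_n,\tau_n)$, one simply invokes Lemma~\ref{l:BS-description}(ii): the identity map $(G,\tau_{BS})\to\glim G_n$ is always continuous, and once $\tau_{BS}$ is known to be a group topology, the maximality defining $\tau_{gr}$ forces the reverse inequality. I do not anticipate a serious obstacle here; the only slightly delicate point is the symmetry argument for $\mathrm{V}[k]$, which is what allows the one-sided hypothesis $\mathrm{V}[k]\subseteq\mathrm{U}^+[k]$ supplied by $(GC)$ to give a genuine two-sided control $\mathrm{V}[k]\cdot\mathrm{V}[k]\subseteq\mathrm{U}[k]$.
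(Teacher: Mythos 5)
Your proposal is correct and follows essentially the same route as the paper: reduce to joint continuity of multiplication at $e$ via the quasitopological group structure, apply $(GC)$ to get $\mathrm{V}[k]\subseteq\mathrm{U}^+[k]$, use the symmetry $\mathrm{V}[k]^{-1}=\mathrm{V}[k]$ to obtain $\mathrm{V}[k]\cdot\mathrm{V}[k]\subseteq(\mathrm{U}^+[k])^{-1}\cdot\mathrm{U}^+[k]=\mathrm{U}[k]$, and invoke Lemma~\ref{l:BS-description}(ii) for the identification with $\glim G_n$. Your explicit justification of the symmetry step is a small clarification of a point the paper leaves implicit.
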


\begin{proof}
Since $(G,\tau_{BS})$ and $\gglim (G_n,\tau_n)$ are semitopological groups (Lemma \ref{l:BS-description}),
it suffices to show that the multiplication $m:G\times G\to G$ is $\tau_{BS}$-continuous at the identity $e\in G$. Let $\mathrm{U}[0]\in \Nn_{BS}^s$ be defined by $(U_n)_{n\in\w} \in \mathfrak{N}$. By $(GC)$, choose  $\mathrm{V}[0]\in \Nn_{BS}^s$ such that $\mathrm{V}[0]\subseteq \mathrm{U}^+[0]$, and hence $\mathrm{V}[0]\subseteq (\mathrm{U}^+[0])^{-1}$. Then
\[
\mathrm{V}[0]\cdot \mathrm{V}[0] \subseteq (\mathrm{U}^+[0])^{-1} \cdot\mathrm{U}^+[0] =\mathrm{U}[0].
\]
Thus $m$ is continuous and $\tau_{BS}$ is a group topology on $G$.
\end{proof}
We do not know whether the condition $(GC)$ is also necessary for $\tau_{BS}$ of being a group topology.

Now we compare the condition $(GC)$ with the condition $PTA$ and the balanced condition. Let us recall their definitions.

\begin{definition}[\cite{TSH}] \label{def:PTA-condition}   {\em
A tower $\{ G_n\}_{n\in\w}$ of topological groups  is said to satisfy {\em $PTA$} (={\em Passing Through Assumption}) if, for every $n\in\w$, the group $G_n$ has a neighborhood base $\mathcal{B}_n$ at the identity $e$, consisting of open symmetric neighborhoods $U\subseteq G_n$ such that for every $m\geq n$ and every neighborhood $W\subseteq G_m$ of $e$ there exists a neighborhood $V \subseteq G_m$ of $e$ such that $VU \subseteq UW$.}
\end{definition}

It is proved in Lemma 2.3 of \cite{TSH} that if a tower $\{ G_n\}_{n\in\w}$ of topological groups  satisfies $PTA$, then $\tau_{BS}$ is a group topology weaker than $\tau_{ind}$. For example, if the tower  consists of balanced groups, then  it satisfies $PTA$.

The balanced condition was defined in \cite{BR} being motivated by towers of balanced groups. Recall that a topological group $G$ is called {\em balanced} if it has a neighborhood base $\{ U_i\}_{i\in I}$ at the identity $e\in G$ consisting of $G$-invariant neighborhoods (i.e., $gU_i g^{-1}=U_i$ for every $g\in G$ and $i\in I$). A triple $(H,\Gamma, G)$ of topological groups $H\leq \Gamma\leq G$ is called {\em balanced} if for every neighborhoods $V\subseteq \Gamma$ and $U\subseteq G$ of the identity $e\in  G$ the product
\[
V\cdot \sqrt{U}^H, \mbox{ where } \sqrt{U}^H:=\{ g\in G: hgh^{-1}\in U \mbox{ for every } h\in H\},
\]
is a neighborhood of $e$.

\begin{definition}[\cite{BR}]  \label{def:balanced-condition}   {\em
A tower $\{ G_n\}_{n\in\w}$ of topological groups  is called {\em balanced} if each triple $(G_n,G_{n+1}, G_{n+2})$, $n\in\w$, is balanced.}
\end{definition}

\begin{proposition} \label{p:PTA-balanced-GC}
If a tower $\{ G_n\}_{n\in\w}$ of topological groups satisfies $PTA$ or is balanced, then it satisfies $(GC)$.
\end{proposition}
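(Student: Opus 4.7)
Both implications follow by the same strategy: given $(U_n)\in\mathfrak{N}$ and $k\in\w$, construct $(V_n)\in\mathfrak{N}$ by induction on $n\ge k$ so that
\[
V_{n}V_{n-1}\cdots V_{k+1}V_k^2V_{k+1}\cdots V_n\;\subseteq\; U_kU_{k+1}\cdots U_{m(n)}
\]
for some $m(n)\ge n$ (for $n<k$ pick $V_n$ arbitrarily). Taking the union over $n\ge k$ then yields $\mathrm{V}[k]\subseteq\mathrm{U}^+[k]$, as required.

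For the $PTA$ case one may first shrink each $U_j$ to lie in the $PTA$-base $\mathcal{B}_j$ (shrinking only shrinks $\mathrm{U}^+[k]$), and take $m(n)=n$. Start with $V_k\subseteq G_k$ open symmetric such that $V_k^2\subseteq U_k$. Given $V_k,\dots,V_j$, choose $A_{j+1}\subseteq G_{j+1}$ open symmetric with $A_{j+1}^2\subseteq U_{j+1}$, set $W_{j+1}:=A_{j+1}$, and for $i=j,j-1,\dots,k$ apply the $PTA$ axiom to $U_i\in\mathcal{B}_i$ at level $m=j+1$ with target $W_{i+1}\subseteq G_{j+1}$ to obtain $W_i\subseteq G_{j+1}$ with $W_iU_i\subseteq U_iW_{i+1}$. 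Let $V_{j+1}\subseteq W_k\cap A_{j+1}$ be any open symmetric neighborhood of $e$ in $G_{j+1}$. Telescoping yields
\[
V_{j+1}\,U_kU_{k+1}\cdots U_j\,V_{j+1}\;\subseteq\;U_kU_{k+1}\cdots U_j\,A_{j+1}^2\;\subseteq\;U_k\cdots U_{j+1},
\]
which, combined with the induction hypothesis, gives the invariant at $j+1$.

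For the balanced case the driving observation is that $v\in\sqrt{U}^{G_i}$ and $u\in G_i$ satisfy $vu=u(u^{-1}vu)\in uU$, so any element of $\sqrt{U}^{G_i}$ may be pushed past any $U_j\subseteq G_j\subseteq G_i$ at the cost of a factor in $U$. I build each $V_n$ as a nested product produced by iterated use of the balanced condition on the triples $(G_{i-2},G_{i-1},G_i)$: $V_n\subseteq P^{(n-1)}\cdot\sqrt{W^{(n-1)}}^{G_{n-2}}$ with $P^{(n-1)}\subseteq G_{n-1}$ itself of the form $P^{(n-2)}\cdot\sqrt{W^{(n-2)}}^{G_{n-3}}$, and so on down to the $G_{k+1}$-level; the radicands $W^{(i)}$ are chosen small enough that after pushing $V_n$ past $U_kU_{k+1}\cdots U_{n-1}$ one radical at a time (and symmetrically handling the right copy of $V_n$), the cumulative error lies in $U_nU_{n+1}\cdots U_{m(n)}$ for a suitable $m(n)$, whence the invariant follows. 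The main obstacle is in the balanced case: unlike $PTA$, a single application of the balanced axiom produces both a radical factor and an auxiliary factor $P^{(i)}$ that must itself be absorbed at the next level down, so the nested construction requires careful coordination between the sizes of the $W^{(i)}$ and of the target sets $A_j$ with $A_j^2\subseteq U_j$; in the $PTA$ case, by contrast, the technical work reduces to the single iterated application above.
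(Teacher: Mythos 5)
The paper does not argue this proposition directly: its entire proof consists of citing Lemma 3.3 of \cite{TSH} for the $PTA$ case and Theorem 4.2 of \cite{BR} for the balanced case. Your $PTA$ half is therefore more than the paper offers, and it is correct and complete: after shrinking each $U_j$ into the base $\mathcal{B}_j$ (which only shrinks $\mathrm{U}^+[k]$), the downward chain of $PTA$ applications $W_iU_i\subseteq U_iW_{i+1}$ telescopes exactly as you write, giving $V_{j+1}\,U_k\cdots U_j\,V_{j+1}\subseteq U_k\cdots U_jA_{j+1}^2\subseteq U_k\cdots U_{j+1}$, and the induction on the invariant closes. I would accept that half as a self-contained proof of the $PTA$ implication.

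The balanced half, however, is a plan rather than a proof, and the plan as stated stalls at a concrete point. The balanced condition for the triple $(G_{n-2},G_{n-1},G_n)$ only lets you place $V_n$ inside $P^{(n-1)}\cdot\sqrt{W^{(n-1)}}^{G_{n-2}}$, so the radical factor of an element of $V_n$ conjugates into $W^{(n-1)}$ only under elements of $G_{n-2}$ --- two levels down. But the word it must traverse, namely $V_{n-1}\cdots V_{k+1}V_k^2V_{k+1}\cdots V_{n-1}$, or in your formulation $U_kU_{k+1}\cdots U_{n-1}$, contains factors from $G_{n-1}$, only one level down; your phrase ``pushing $V_n$ past $U_kU_{k+1}\cdots U_{n-1}$ one radical at a time'' already fails at the factor $U_{n-1}$, past which $\sqrt{W^{(n-1)}}^{G_{n-2}}$ cannot be moved. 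A second, related problem: a push $ru=u\big(u^{-1}ru\big)\in uW$ deposits its error $W$ immediately to the right of $u$, i.e.\ in the interior of the word. This is harmless only if the entire remaining suffix lies in the subgroup indexing the radical, so that the conjugation can be performed in a single step; otherwise the interior errors are not themselves radicals and cannot be transported to the tail $U_nU_{n+1}\cdots U_{m(n)}$ where you intend to absorb them. You acknowledge that the radicands $W^{(i)}$ and the auxiliary factors $P^{(i)}$ ``require careful coordination,'' but that coordination --- resolving the index mismatch and the placement of the error terms --- is precisely the content of Theorem 4.2 of \cite{BR}, and it is not supplied here. As it stands, the balanced case remains unproved.
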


\begin{proof}
If $\{ G_n\}$ satisfies $PTA$, then it satisfies $(GC)$ by Lemma 3.3 of \cite{TSH}. If $\{ G_n\}$ is balanced, then it satisfies $(GC)$ by Theorem 4.2 of \cite{BR}.
\end{proof}

\begin{remark} \label{rem:PTA-balanced-GC} {\em
It is shown in Theorem 6.1 and Example 6.2 of \cite{BR}, that there are balanced towers which do not satisfy $PTA$ and non-balanced towers which satisfy $PTA$. Therefore, by Proposition \ref{p:PTA-balanced-GC}, the condition $(GC)$ is strictly weaker than the condition of being $PTA$ and the condition of being balanced. \qed}
\end{remark}


To prove the main result of this section we need the following lemma.
\begin{lemma} \label{l:IL-1}
Let $H$ be a closed subgroup of a topological group $G$ and let $a,b\in G$ be such that $a\not\in H$ and $b\in H$. Then there is a neighborhood $W$ of the unit $e\in G$ such that $a\not\in b\cdot W HW$.
\end{lemma}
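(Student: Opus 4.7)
The plan is to exploit the fact that $H$ is closed and $b\in H$ to reduce the problem to a direct continuity-of-multiplication argument. First I would note that, since $b\in H$ and $H$ is a subgroup, the element $c := b^{-1}a$ satisfies $c\notin H$ (otherwise $a=bc\in H$, contradicting the hypothesis). Multiplying the inclusion $a\in b\cdot WHW$ on the left by $b^{-1}$ shows that $a\notin b\cdot WHW$ is equivalent to $c\notin WHW$. If $W$ is chosen symmetric, this in turn is equivalent to $WcW\cap H=\emptyset$.

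So the task reduces to finding a symmetric open neighborhood $W$ of $e$ with $WcW\cap H=\emptyset$. Here the closedness of $H$ enters: $G\setminus H$ is an open neighborhood of $c$. Consider the continuous map $\phi:G\times G\to G$ defined by $\phi(u,v):=ucv$; it is continuous because $G$ is a topological group, and it sends $(e,e)$ to $c\in G\setminus H$. By continuity at $(e,e)$, there exist open neighborhoods $W_1,W_2$ of $e$ with $\phi(W_1,W_2)=W_1cW_2\subseteq G\setminus H$. Setting $W:=W_1\cap W_2\cap W_1^{-1}\cap W_2^{-1}$ yields a symmetric open neighborhood of $e$ with $WcW\cap H=\emptyset$, hence $a\notin b\cdot WHW$, as required.

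I do not foresee any real obstacle: the argument is standard topological-group machinery and the only subtlety is the symmetrization step, which is needed so that $WcW\cap H=\emptyset$ translates back into $c\notin WHW$. It is worth noting that joint continuity of multiplication is used in an essential way in passing from continuity of $\phi$ to the existence of such a $W$; this is precisely why the lemma is formulated for topological groups rather than merely quasitopological groups (for which only separate continuity would be available).
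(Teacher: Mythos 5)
Your proof is correct and is essentially the same argument as the paper's: both reduce to the element $b^{-1}a\notin H$, use closedness of $H$ to get an open neighborhood of it disjoint from $H$, and pull back along the jointly continuous map $(u,v)\mapsto u\,b^{-1}a\,v$ (the paper builds the inverses into the map $p(t,g)=t^{-1}b^{-1}ag^{-1}$, so it skips your symmetrization step, but that is a cosmetic difference). No gaps.
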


\begin{proof}
Define a map $p:G\times G\to G$ by $p(t,g):= t^{-1} b^{-1} a g^{-1}$. Then $p$ is continuous and $p(e,e)=b^{-1} a \not\in H$. Since $H$ is closed, there exists a neighborhood $V$ of $p(e,e)$ such that $V\cap H=\emptyset$. Choose a neighborhood $W$ of $e$ such that $p(W,W) \subseteq V$. Then, for every $t,g\in W$, we obtain $t^{-1} b^{-1} a g^{-1}\not\in H$ or $a\not\in btHg$. This means that $a\not\in b WHW$.
\end{proof}

\begin{theorem} \label{t:IL-properties-compact}
Let a closed  tower $\{ (G_n,\tau_n)\}_{n\in\w}$  of  Hausdorff  topological groups satisfy $(GC)$. Set $G:=\bigcup_{n\in\w} G_n$. Then:
\begin{enumerate}
\item[{\rm (i)}] $\tau_k = \tau_{BS}|_{G_k}$ and $(G_k,\tau_k)$ is $\tau_{BS}$-closed;
\item[{\rm (ii)}] $\tau_{BS}$ is a Hausdorff group topology on $G$, so $(G,\tau_{BS})= \glim (G_n,\tau_n)$;
\item[{\rm (iii)}] if $K$ is a compact subset of $(G,\tau_{BS})$, then $K\subseteq G_m$ for some $m\in\w$.
\end{enumerate}
\end{theorem}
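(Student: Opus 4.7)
The plan is to prove (i) in two stages — first the equality $\tau_k=\tau_{BS}|_{G_k}$, then the $\tau_{BS}$-closedness of $G_k$ — deduce (ii) immediately, and finally establish (iii) by a separation argument based on Lemma \ref{l:IL-1}.

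\emph{Part (i).} By Proposition \ref{p:BS-GC}, $\tau_{BS}$ is a group topology on $G$. The inequality $\tau_{BS}\le\tau_{ind}$ from Lemma \ref{l:BS-description}(i), combined with the defining property of $\tau_{ind}$, gives $\tau_{BS}|_{G_k}\le\tau_k$ for free. For the reverse $\tau_k\le\tau_{BS}|_{G_k}$, given a $\tau_k$-open symmetric neighborhood $W$ of $e$ in $G_k$, I plan to construct $(U_n)\in\mathfrak N$ inductively so that $\mathrm U[k]\cap G_k\subseteq W$: pick $U_k\subseteq G_k$ with $U_k^2\subseteq W$, and at each step $n>k$, given $U_k,\dots,U_{n-1}$ with $P_{n-1}:=U_{n-1}\cdots U_k^2\cdots U_{n-1}$ satisfying $P_{n-1}\cap G_k\subseteq W$, choose $U_n\subseteq G_n$ so small that $P_n:=U_nP_{n-1}U_n$ still satisfies $P_n\cap G_k\subseteq W$. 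To do this one splits an element $upu'\in P_n\cap G_k$ according to whether the middle factor $p\in P_{n-1}$ lies in $G_k$: if $p\in P_{n-1}\cap G_k\subseteq W$, continuity of the multiplication in $(G_n,\tau_n)$ gives a $U_n$ keeping $upu'$ inside a preselected $\tau_n$-neighborhood of $W$; if $p\notin G_k$, closedness of $G_k$ in $G_n$ (inherited by iterating the closed-tower hypothesis) together with Lemma \ref{l:IL-1} lets me shrink $U_n$ so that $U_npU_n\cap G_k=\emptyset$. For the $\tau_{BS}$-closedness of $G_k$, take $a\in G\setminus G_k$, let $n$ be minimal with $a\in G_n$, and apply Lemma \ref{l:IL-1} in $G_n$ (with $H=G_k$, $b=e$) to obtain a symmetric $\tau_n$-open $U_n$ with $aU_n^2\cap G_k=\emptyset$; extending to $(U_m)\in\mathfrak N$ by the same step-by-step shrinking — using Lemma \ref{l:IL-1} at each larger stage $m>n$ against the $\tau_m$-closed $G_k$ — produces a BS neighborhood $a\mathrm U[n]$ disjoint from $G_k$.

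\emph{Part (ii).} Since $(G_0,\tau_0)$ is Hausdorff, $\{e\}$ is $\tau_0$-closed in $G_0$, hence $\tau_{BS}|_{G_0}$-closed in $G_0$ by the first half of (i), and finally $\tau_{BS}$-closed in $G$ because $G_0$ is $\tau_{BS}$-closed in $G$ by the second half of (i). Thus $\tau_{BS}$ is a Hausdorff group topology on $G$, and then $(G,\tau_{BS})=\glim(G_n,\tau_n)$ by Proposition \ref{p:BS-GC}.

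\emph{Part (iii).} Suppose $K\subseteq(G,\tau_{BS})$ is compact yet $K\not\subseteq G_m$ for every $m$. Extracting a subsequence, choose $x_l\in K$ with $x_l\in G_{n_l}\setminus G_{n_l-1}$ for a strictly increasing sequence $n_l$. The plan is to build $(U_n)\in\mathfrak N$ such that the translates $\{x_l\mathrm U[0]\}_l$ are pairwise disjoint and $\{x_l\}$ is $\tau_{BS}$-closed; then $\{x_l\}$ becomes a closed discrete subset of the compact $K$, hence finite — a contradiction. Inductively on $l$, having fixed $U_0,\dots,U_{n_l-1}$, apply Lemma \ref{l:IL-1} inside $G_{n_l}$ to $H=G_{n_l-1}$ (closed by hypothesis), $a=x_l$, and $b$ running successively over the previously selected $x_{l'}$ (all lying in $G_{n_{l-1}}\subseteq G_{n_l-1}$), to pick a symmetric $\tau_{n_l}$-open $U_{n_l}$ separating $x_l$ from $x_1,\dots,x_{l-1}$ through the already-assembled BS prefix; a further Lemma \ref{l:IL-1} shrinking at each subsequent stage $m>n_l$ prevents later BS tails from re-introducing collisions. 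The same type of construction run against an arbitrary candidate accumulation point $y\in G_N\setminus\{x_l\}_l$ shows $\{x_l\}$ is $\tau_{BS}$-closed.

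\emph{Main obstacle.} The hardest step is the inductive construction in (i) establishing $\tau_k\le\tau_{BS}|_{G_k}$: at every stage $n$ one must choose $U_n$ that simultaneously controls the whole symmetric product $U_nP_{n-1}U_n$ on both the $p\in G_k$ and $p\notin G_k$ branches. It is precisely the closedness of $G_k$ in every $G_n$ — activated through Lemma \ref{l:IL-1} — that makes the $p\notin G_k$ branch tractable, so the closed-tower hypothesis is genuinely used here; without it the subspace $\tau_{BS}$-topology on $G_k$ could be strictly coarser than $\tau_k$.
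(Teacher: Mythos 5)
Your skeleton matches the paper's (compare topologies and prove closedness for (i), deduce (ii) from $T_1$-ness, separate a sequence $a_k\in G_{n_k}\setminus G_{n_k-1}$ for (iii)), but the central inductive mechanism you propose for Part (i) has a genuine uniformity gap, and it is the part you yourself flag as the main obstacle. At stage $n$ you want a \emph{single} $U_n$ such that $U_nP_{n-1}U_n\cap G_k\subseteq W$, splitting on whether the middle factor $p\in P_{n-1}$ lies in $G_k$. Both branches require a choice of $U_n$ working simultaneously for \emph{all} $p$, and no compactness is available to make this uniform: in the first branch the points of $P_{n-1}\cap G_k$ may approach the boundary of $W$ inside $G_k$, so no fixed $U_n$ keeps every perturbation $upu'$ that happens to land in $G_k$ inside $W$; in the second branch Lemma \ref{l:IL-1} separates \emph{one} point $a\notin H$ at a time, while $P_{n-1}\setminus G_k$ is an infinite non-compact set whose points can come arbitrarily close to $G_k$, so no single $U_n$ gives $U_npU_n\cap G_k=\emptyset$ for all of them. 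The same difficulty reappears, amplified, in your Part (iii): the phrase ``a further Lemma \ref{l:IL-1} shrinking at each subsequent stage prevents later BS tails from re-introducing collisions'' hides the real work, because the tails $U(n,0)$ for $n\to\infty$ contain unboundedly many factors and must be controlled all at once.

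The paper's device, which you are missing, is purely algebraic and needs no uniformity: impose $U_{m+1}^2\cap G_m\subseteq U_m$ and work with the one-sided product $\mathrm{U}^+[n]=\bigcup_m U_n\cdots U_m$. If $u_n\cdots u_mvw\in G_n\subseteq G_m$ with $v,w\in U_{m+1}$, then the prefix $u_n\cdots u_m$ already lies in $G_m$, forcing $vw\in U_{m+1}^2\cap G_m\subseteq U_m$; iterating, the whole product collapses to $U_n^2$, so $\mathrm{U}^+[n]\cap G_n\subseteq U_n^2$ and one single initial choice of $U_n$ (with $U_n^2$ avoiding $g^{-1}D$) settles everything. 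The price is that $\mathrm{U}^+[n]$ is not itself a BS-neighborhood, and this is precisely where $(GC)$ is invoked a second time: it provides $\mathrm{V}[n]\subseteq\mathrm{U}^+[n]$, making $\mathrm{U}^+[n]$ a $\tau_{BS}$-neighborhood of $e$. Your plan uses $(GC)$ only through Proposition \ref{p:BS-GC} and tries to control the symmetric set $\mathrm{U}[k]$ directly, which is exactly what does not telescope. In Part (iii) the paper runs the same collapse with nested doubly-indexed neighborhoods $W_{i,k}$ so that each block $U_{n_j}\cdots U_{n_{j+1}-1}$ contracts into a single $W_j$ and $\prod_{j\ge l}W_j\subseteq W_l^2$, yielding $\mathrm{U}[0]\subseteq W_l^2G_{n_l-1}W_l^2$; this is the bookkeeping your sketch omits. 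Your Part (ii) and the final compactness contradiction in (iii) are fine once the telescoping device is in place.
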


\begin{proof}
(i) Our proof follows the proof of Proposition 3.2 of \cite{TSH}. Let $D$ be a $\tau_k$-closed subset of $G_k$ (in particular, $D=G_k$). We have to show that $D$ is also $\tau_{BS}$-closed. Fix arbitrarily an element $g\in G\setminus D$. Choose an $n>k$ such that $g\in G_n$. Then, for every $j\geq n$, $g^{-1} D$ is closed in $G_j$ and $e\not\in g^{-1}D$. For $j=n$, choose  $U_n \in \Nn_n^s$ such that $U_n^2 \cap g^{-1} D=\emptyset$. By induction on $j=n,\dots, m$, for $j=m+1$ choose $U_{m+1} \in \Nn_{m+1}^s$ such that $U_{m+1}^2 \cap G_m \subseteq U_m$. Then (recall that $U_n \cdots U_m \subseteq G_m$)
\[
\begin{aligned}
 (U_n \cdot U_{n+1} \cdots U_m \cdot U_{m+1}^2) \cap G_n
& \subseteq \big( (U_n \cdot U_{n+1} \cdots  U_m) \cdot  (U_{m+1}^2 \cap G_m)\big) \cap G_n \\
&  \subseteq ( U_n \cdot U_{n+1} \cdots  U_m^2) \cap G_n \subseteq \cdots \subseteq U_n^2 \cap G_n =U_n^2.
\end{aligned}
\]
Define $U:=\mathrm{U}^+[n]= \bigcup_{m\geq n} U_n\cdots U_m$. Then, by $(GC)$, $U$ is a $\tau_{BS}$-neighborhood of $e\in G$, and, the above inclusions imply (recall that $n>k$ and $D\cup\{ g\}\subseteq G_n$)
\[
gU\cap D = (gU\cap G_n) \cap D =g(U\cap G_n) \cap D \subseteq gU_n^2 \cap D=\emptyset.
\]
Therefore $D$ is closed in $\tau_{BS}$. Thus $\tau_k \subseteq \tau_{BS}|_{G_k}$, and hence $\tau_k = \tau_{BS}|_{G_k}$.

(ii) By the proof of (i), every one-point subset of $G$ is $\tau_{BS}$-closed. Therefore $\tau_{BS}$ is a $T_0$-topology. Now Theorem 4.8 of \cite{HR1} and Proposition  \ref{p:BS-GC} imply that $\tau_{BS}$ is a Hausdorff group topology.

(iii) Suppose for a contradiction that,  for every $n\in\w$, the compact set $K$ is not contained in $G_n$. Then we can find a sequence $0<n_0 <n_1<\cdots$ in $\w$ and a sequence $\{ a_k\}_{k\in\w} $ in $K$ such that $a_k\in G_{n_k}\setminus G_{n_k-1}$ for every $k\in\w$.

By (i) and (ii) and Lemma \ref{l:IL-1}, for every $k\in\w$, choose $W_k\in \Nn^s_G$ such that
\begin{equation} \label{equ:IL-1}
W_{k+1}^2 \subseteq W_k \; \mbox{ and } \; a_k \not\in \bigcup_{i=0}^{k -1} a_i \cdot W_k^2 G_{n_k -1} W_k^2.
\end{equation}
For every $k,i\in\w$, choose $W_{i,k}\in \Nn^s_G$ such that
\begin{equation} \label{equ:IL-2}
W_{0,k}^2 \subseteq W_k \; \mbox{ and } \; W_{i+1,k}^2 \subseteq W_{i,k}.
\end{equation}
For every $i=0,\dots,n_0-1$, set $U_i :=G_i$. If $j\in \w$ and $i=n_j,\dots,n_{j+1}-1$, choose $U_i\in \Nn_{i}^s$ such that
\begin{equation} \label{equ:IL-3}
U_i \subseteq  W_{i,j}.
\end{equation}
Set $\mathrm{U}[0]:= \bigcup_{n\in\w} U(n,0)$.

Fix an $l\in\w$. For every $n>n_l$ choose $s\in\w$ such that $n_s \leq n<n_{s+1}$, so $l\leq s$. Then
\[
\begin{aligned}
\prod_{i=n_j}^{n_{j+1}-1} U_{n_{j+1}-1 +n_j -i} & = U_{n_{j+1}-1} \cdot U_{n_{j+1}-2} \cdots U_{n_{j}} \\
& \stackrel{(\ref{equ:IL-3})}{\subseteq}  W^2_{n_{j+1}-1,j} \cdot W_{n_{j+1}-2,j} \cdots W_{n_{j},j} \stackrel{(\ref{equ:IL-2})}{\subseteq}  W^2_{n_{j},j} \subseteq  W_j
\end{aligned}
\]
and
\[
\begin{aligned}
\prod_{i=n_j}^{n_{j+1}-1} U_i & = U_{n_{j}}\cdot U_{n_{j}+1} \cdots U_{n_{j+1}-1} \\
& \stackrel{(\ref{equ:IL-3})}{\subseteq}  W_{n_j,j} \cdot W_{n_j+1,j} \cdots W_{n_{j+1} -1,j}^2 \stackrel{(\ref{equ:IL-2})}{\subseteq}   W^2_{n_{j},j} \subseteq  W_j,
\end{aligned}
\]
and hence
\[
\begin{aligned}
U(n,0) & \subseteq \prod_{j=l}^s \left(\prod_{i=n_j}^{n_{j+1}-1} U_{n_{j+1}-1 +n_j -i} \right) \cdot \prod_{i=0}^{n_l-1} U_{n_l-1 -i} \cdot \prod_{i=0}^{n_l-1} U_i \cdot \prod_{j=l}^s \left(\prod_{i=n_j}^{n_{j+1}-1} U_i \right) \\
& \subseteq \prod_{j=l}^s W_{s+l-j} \cdot G_{n_l -1} \cdot \prod_{j=l}^s W_j \stackrel{(\ref{equ:IL-1})}{\subseteq} W_l^2 \cdot G_{n_l -1} \cdot W_l^2.
\end{aligned}
\]
Therefore, for every $l\in\w$, we obtain
\begin{equation} \label{equ:IL-4}
\mathrm{U}[0]= \bigcup_{n>n_l} U(n,0) \subseteq W_l^2 \cdot G_{n_l -1} \cdot W_l^2.
\end{equation}
Now, for every $k,l\in\w$  such that $l>k$, (\ref{equ:IL-1}) and (\ref{equ:IL-4}) imply
$
a_l \cdot a_k^{-1} \not\in \mathrm{U}[0].
$
Since $\mathrm{U}[0]$ is a neighborhood of $e$ for the group topology $\tau_{BS}$ we obtain that $K$ is not compact. 
This contradiction finishes the proof. 
\end{proof}

It is well known that the strict inductive limit of complete locally convex spaces is complete, see \cite{bonet}. For the abelian case we prove an analogous result with a similar proof.
\begin{proposition} \label{p:IL-completness}
Let $\{ G_n\}_{n\in\w}$ be a closed  tower of abelian topological groups. Then the group $\glim G_n$ is complete if and only if all the groups $G_n$ are complete.
\end{proposition}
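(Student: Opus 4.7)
\emph{Easy direction.} Every tower of abelian groups is automatically balanced in the sense of Definition~\ref{def:balanced-condition}: abelianness forces $\sqrt{U}^{G_n}=U$, so $V\cdot\sqrt{U}^{G_n}=V\cdot U$ is a neighborhood of $e$. Hence by Proposition~\ref{p:PTA-balanced-GC} the tower satisfies $(GC)$, and Theorem~\ref{t:IL-properties-compact}(i) applies: each $G_n$ is a $\tau_{BS}$-closed subgroup of $G=(G,\tau_{BS})=\glim G_n$ bearing its own topology $\tau_n$. Since any closed subgroup of a complete Hausdorff topological group is complete, $G$ complete implies each $G_n$ complete.

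\emph{Hard direction.} Assume each $G_n$ is complete. My plan is to show every Cauchy filter $\mathcal{F}$ on $G$ converges. After passing to an ultrafilter refinement $\mathcal{U}\supseteq\mathcal{F}$ (still Cauchy, with the same convergence question), the key step is:
\begin{center}
$(\star)$\quad $G_m\in\mathcal{U}$ for some $m\in\w$.
\end{center}
Granting $(\star)$, the trace $\mathcal{U}|_{G_m}$ is a Cauchy filter on $(G_m,\tau_m)$ (using Theorem~\ref{t:IL-properties-compact}(i)) and converges by completeness to some $x\in G_m\subseteq G$; so $\mathcal{U}\to x$ in $G$, whence $\mathcal{F}\to x$.

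To prove $(\star)$, I would mimic the bamboo-shoot construction of Theorem~\ref{t:IL-properties-compact}(iii) in the Cauchy-filter setting. Suppose by contradiction $G_m\notin\mathcal{U}$ for every $m$, so $G\setminus G_m\in\mathcal{U}$ for every $m$. The plan is an interleaved induction: at stage $k$ I would pre-select a decreasing 0-nbhd $V_k$ of $G$ with $V_k+V_k\subseteq V_{k-1}$, extract a Cauchy witness $B_k\in\mathcal{U}$ with $B_k-B_k\subseteq V_k$, pick
\[
x_k\in\bigcap_{j\leq k}B_j\cap(G\setminus G_{n_{k-1}})\in\mathcal{U}
\]
(which determines $n_k>n_{k-1}$ with $x_k\in G_{n_k}\setminus G_{n_k-1}$), and then define the bamboo-shoot data $W_k\in\Nn^s_G$ with $W_k^2\subseteq W_{k-1}$ and $x_k\notin G_{n_k-1}+W_k^4$ (using closedness of $G_{n_k-1}$ and $x_k\notin G_{n_k-1}$), together with $W_{i,k}$ and $U_i\in\Nn^s_i$ for $n_{k-1}\leq i<n_k$ as in the (additive) analogues of (\ref{equ:IL-2})--(\ref{equ:IL-3}). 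The inclusion (\ref{equ:IL-4}) then gives $\mathrm{U}[0]\subseteq W_l^2+G_{n_l-1}+W_l^2$ for every $l$, whence $x_l-x_k\notin\mathrm{U}[0]$ for $l>k$.

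The hardest part will be closing the circular loop. Since $x_l,x_k\in B_{\min(l,k)}$ yields $x_l-x_k\in V_{\min(l,k)}$, a containment $V_k\subseteq\mathrm{U}[0]$ for large $k$ would force $x_l-x_k\in\mathrm{U}[0]$, contradicting the preceding exclusion. Arranging $V_k\subseteq\mathrm{U}[0]$ is delicate because $\mathrm{U}[0]$ is built from $U_i$'s chosen after the $V_k$-witnesses; I plan to handle this by exploiting that, at each stage, the $U_i$'s with $i\geq n_k$ are still free, and can be shrunk at later stages so that some partial bamboo-shoot $U(n,0)\subseteq\mathrm{U}[0]$ absorbs the pre-chosen $V_k$. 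Should this inductive bookkeeping prove intractable, an alternative plan is to work in the Raikov completion $\widehat{G}$: a non-converging Cauchy ultrafilter has a limit $x\in\widehat{G}\setminus G$ with $x\notin G_n$ for any $n$, and closedness of each $G_n$ in $\widehat{G}$ (by completeness of $G_n$) lets one produce a Cauchy sequence in $G$ whose levels increase unboundedly, to which the construction of Theorem~\ref{t:IL-properties-compact}(iii) can be directly applied to derive the contradiction.
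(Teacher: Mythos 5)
Your easy direction is correct and usefully makes explicit a point the paper leaves implicit (abelian $\Rightarrow$ balanced in the sense of Definition~\ref{def:balanced-condition} $\Rightarrow$ $(GC)$ by Proposition~\ref{p:PTA-balanced-GC}, so Theorem~\ref{t:IL-properties-compact} applies). The hard direction, however, rests on the reduction $(\star)$: \emph{some $G_m$ belongs to the Cauchy ultrafilter $\mathcal{U}$} --- and $(\star)$ is false. Take $G=\phi=\glim \RR^n$: each $\RR^m$ is a proper closed subgroup with empty interior, so the sets $U\setminus \RR^m$ ($U$ a neighborhood of $0$, $m\in\w$) form a Cauchy filterbase converging to $0$, and any ultrafilter refining it is Cauchy yet contains $G\setminus \RR^m$ for every $m$. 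Note that for an \emph{ultrafilter} the statement ``every member meets $G_m$'' is equivalent to ``$G_m\in\mathcal{U}$'', so passing to an ultrafilter is precisely the move that makes the reduction unachievable. The paper's proof goes the other way: it enlarges each filter set $F$ to $FU_\alpha$ and proves only that every \emph{fattened} set meets some fixed $G_n$ --- which suffices for the trace to be a Cauchy filterbase on $G_n$ --- and the contradiction is obtained not by an escaping sequence of points but by taking a single $F$ with $F^{-1}F\subseteq \mathrm{W}[0]$ and using \emph{commutativity} to rearrange a bamboo-shoot word $v_m\cdots v_0u_0\cdots u_m$ so that the factors of level $\leq n$ fall into $G_n$ while the rest collapse into $V_{n+1}^4\subseteq U_n$, forcing $F_nU_n\cap G_n\neq\emptyset$. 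It is no accident that your plan never uses commutativity while the proposition is stated only for abelian groups.

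Neither of your two routes to $(\star)$ closes the loop you identify. For the main route you would need $V_k\subseteq \mathrm{U}[0]$, but every partial bamboo-shoot $U(n,0)$ is contained in $G_n$, which has empty interior in $G$, so no amount of adjusting finitely many $U_i$ can make a partial bamboo-shoot ``absorb'' a genuine $G$-neighborhood $V_k$; a containment $V_k\subseteq\mathrm{U}[0]$ would constrain \emph{all} the $U_i$, including those chosen after $V_k$, while the exclusions (\ref{equ:IL-1})--(\ref{equ:IL-3}) force exactly those $U_i$ to be small in a way dictated by the already-selected points $x_j$ --- the two requirements are in direct conflict. The Raikov fallback fails at the extraction step: unless case $(iii)_1$ of Theorem~\ref{t:Ascoli-inductive-1} holds, $G$ (hence $\widehat{G}$) has no countable base at $e$, so a point $x\in\widehat{G}\setminus G$ need not be the limit of any sequence from $G$, and no Cauchy sequence with unbounded levels can be guaranteed to exist. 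You should replace the ultrafilter reduction by the paper's fattening-plus-commutativity argument.
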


\begin{proof}
We write the addition in $G$ multiplicatively.
First we note that, by Theorem \ref{t:IL-properties-compact}, $G:=\gglim G_n$ is Hausdorff and $G_n$ is a closed subgroup $G$ for every $n\in\w$.
If $G$ is complete, then for every $n\in\w$, the group $G_n$ is complete as a closed subgroup of a complete group. Assume now that all the groups $G_n$ are complete. We have to show that each Cauchy filterbase  $\mathfrak{F}=\{ F_i:i\in I\}$  on $G$ converges.
Fix a filterbase $\U=\{ U_\alpha: \alpha\in A\}$ of open symmetric neighborhoods at the identity $e\in G$. Then the family $\mathfrak{B}=\{ F_i U_\alpha :i\in I, \alpha\in A\}$ is also a Cauchy filterbase on $G$.


We claim that $\mathfrak{B}\cap G_n$ is a Cauchy filterbase on $G_n$ for some $n\in\w$. Indeed, otherwise, for every $n\in\w$ there are $F_n\in \mathfrak{F}$ and $U_n\in \U$ such that $F_n U_n \cap G_n =\emptyset$. For every $n\in\w$, choose $V_n\in \U$ such that
\begin{equation} \label{equ:IL-complete}
V_0^4 \subseteq U_0 \; \mbox{ and } \; V_{n+1}^4 \subseteq V_n \cap U_{n}
\end{equation}
and set $W_n := V_n \cap G_n$. Then $\mathrm{W}[0]$ is an open symmetric neighborhood at $e$ in $G$. Since $\mathfrak{B}$ is a Cauchy filterbase, there is $FU\in \mathfrak{B}$ such that $FU\cdot (FU)^{-1} \cup (FU)^{-1}\cdot FU \subseteq V$. 
 Since $\mathfrak{F}$ is a Cauchy filterbase, some $F\in \mathfrak{F}$ must satisfy $FF^{-1} \cup F^{-1}F \subseteq \mathrm{W}[0]$.

Fix arbitrarily $g\in F$ and choose $n\in\w$ such that $g\in G_n$. For every $h\in F\cap F_n$, we have
\[
g=h\cdot (h^{-1}g) \in F_n \cdot (F^{-1}F)\subseteq F_n \mathrm{W}[0],
\]
and hence $g=f_n \cdot t$ for some $f_n\in F_n$ and $t\in \mathrm{W}[0]$. By the definition of $\mathrm{W}[0]$, there are  $m>n$ and $v_i,u_i\in W_i$ ($i=0,\dots,m)$ such that
\[
t= v_m \cdots v_1 \cdot v_0u_0 \cdot u_1 \cdots u_m.
\]
Thus (we use the commutativity of $G$)
\[
\begin{aligned}
G_n \ni g (v_n \cdots v_1 \cdot v_0u_0 & \cdot u_1 \cdots u_n)^{-1} = f_n \cdot (v_m \cdots v_{n+1})\cdot (u_{n+1} \cdots u_m) \\
& \in f_n \cdot W_{n+1}^2 \cdots W_m^2 \stackrel{(\ref{equ:IL-complete})}{\subseteq} f_n \cdot V_{n+1}^4 \subseteq F_n \cdot U_n.
\end{aligned}
\]
But since  $F_n U_n \cap G_n =\emptyset$ we obtain a desired contradiction. The claim is proved.

Now since $G_n$ is complete, the claim implies that $\mathfrak{B}\cap G_n$ has a limit point $x\in G_n$. Therefore $\mathfrak{B}$ converges to $x$ in $G$, and hence also $\mathfrak{F}$ converges to $x$. Thus $G$ is complete. 
\end{proof}


\section{Main results} \label{sec:main}


Let $G$ be a topological group with a  $\GG$-base  $\U=\{ U_\alpha : \alpha\in\w^\w\}$.  For every $k\in\w$ and $\alpha\in \w^\w$, set
\[
D_k(\alpha) := \bigcap_{\beta\in  I_k(\alpha)} U_\beta, \mbox{ where } I_k(\alpha) := \left\{ \beta\in\w^\w : \beta_i = \alpha_i \mbox{ for } i=0,\dots,k\right\}.
\]
Following \cite{GKL2}, we say that  $\U$ satisfies the {\em condition $\dd$} if $\bigcup_{k\in\w} D_k(\alpha)$ is a neighborhood of $e$ for every $\alpha\in\w^\w$. The condition $\dd$  guaranties that the group $G$ has the Pytkeev property, see Theorem 6 of \cite{GKL2}. 


\begin{theorem} \label{t:IS-Pytkeev-G-base}
Let  a closed tower $\{ (G_n,\tau_n)\}_{n\in\w}$ of metrizable groups satisfy $(GC)$.  Then:
\begin{enumerate}
\item[{\rm (i)}] $\glim G_n$ has a $\GG$-base satisfying $\dd$;
\item[{\rm (ii)}]  $\glim G_n$ is an $\aleph$-space.
\end{enumerate}
\end{theorem}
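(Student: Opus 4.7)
The plan is to combine the structural facts from Section~\ref{sec:GC-condition}---namely that $\tau_{BS}$ is a Hausdorff group topology with $(G,\tau_{BS})=\glim G_n$, that $\tau_n=\tau_{BS}|_{G_n}$ with $G_n$ being $\tau_{BS}$-closed, and that every $\tau_{BS}$-compact subset lies in some $G_m$ (Proposition~\ref{p:BS-GC} and Theorem~\ref{t:IL-properties-compact})---with the metrizability of each $G_n$.

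For (i), I would fix, for each $n\in\w$, a decreasing countable base $\{V_n^m\}_{m\in\w}$ of open symmetric $\tau_n$-neighborhoods of $e\in G_n$. For each $\alpha\in\w^\w$ set $U_n^\alpha := V_n^{\alpha(n)}$, and define $W_\alpha:=\mathrm{U}^\alpha[0]$, the Bamboo-Shoot neighborhood built from the sequence $(U_n^\alpha)_{n\in\w}\in\mathfrak{N}$. Monotonicity $W_\beta\subseteq W_\alpha$ whenever $\alpha\leq\beta$ is immediate from the monotonicity of the $V_n^m$. To see $\{W_\alpha\}_{\alpha\in\w^\w}$ is a neighborhood base at $e$, given a basic $\tau_{BS}$-neighborhood $\mathrm{U}[k]$ I would choose $\alpha(n)$ for $n\geq k$ with $V_n^{\alpha(n)}\subseteq U_n$, and then choose $\alpha(0),\ldots,\alpha(k-1)$ large enough that the $\tau_k$-product $V_{k-1}^{\alpha(k-1)}\cdots(V_0^{\alpha(0)})^2\cdots V_{k-1}^{\alpha(k-1)}$ lies inside $U_k$; this is possible by metrizability of $G_k$ together with continuity of the inclusions $G_i\hookrightarrow G_k$. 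For condition $\dd$, note that if $\beta\in I_k(\alpha)$ then $U_i^\beta=U_i^\alpha$ for all $i\leq k$, so the $n=k$ summand in the union defining $W_\beta$ equals the single fixed set
$$A_k := U_k^\alpha\cdots U_1^\alpha (U_0^\alpha)^2 U_1^\alpha\cdots U_k^\alpha,$$
which therefore lies in $D_k(\alpha)$. Consequently $\bigcup_{k\in\w} D_k(\alpha)\supseteq\bigcup_{k\in\w} A_k = \mathrm{U}^\alpha[0]=W_\alpha$, itself a $\tau_{BS}$-neighborhood of $e$.

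For (ii), since each metrizable $G_n$ is an $\aleph$-space I would fix a $\sigma$-locally finite $k$-network $\Nn_n=\bigcup_{m\in\w}\Nn_n^m$ in $(G_n,\tau_n)$ and take $\Nn:=\bigcup_{n\in\w}\Nn_n$. Regularity of $G$ is free from the group structure. For the $k$-network property, any compact $K\subseteq U$ in $G$ satisfies $K\subseteq G_m$ for some $m$ by Theorem~\ref{t:IL-properties-compact}(iii), and $U\cap G_m$ is $\tau_m$-open by Theorem~\ref{t:IL-properties-compact}(i), so finitely many members of $\Nn_m$ cover $K$ inside $U\cap G_m\subseteq U$. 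For $\sigma$-local finiteness, each $\Nn_n^m$ remains locally finite in $(G,\tau_{BS})$: at $g\notin G_n$, closedness of $G_n$ provides a $\tau_{BS}$-neighborhood of $g$ disjoint from $G_n$; at $g\in G_n$, a $\tau_n$-neighborhood $O'\ni g$ meeting only finitely many members of $\Nn_n^m$ can be written $O'=V\cap G_n$ for some $\tau_{BS}$-open $V$ using $\tau_n=\tau_{BS}|_{G_n}$, and $V$ meets the same finitely many members because $\Nn_n^m\subseteq\mathcal{P}(G_n)$.

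The only point that requires bookkeeping is the basicity check in~(i), where the initial factors $U_0^\alpha,\ldots,U_{k-1}^\alpha$ of $W_\alpha$ must be absorbed into the central factor $U_k$; this is where metrizability of $G_k$ is genuinely used. Beyond that the argument is a clean consequence of Theorem~\ref{t:IL-properties-compact} and the observation above that the $n=k$ slice of $W_\beta$ is $\beta$-invariant on $I_k(\alpha)$.
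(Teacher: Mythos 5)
Your proposal is correct and follows essentially the same route as the paper: the $\GG$-base is built exactly as $W_\alpha=\mathrm{U}^\alpha[0]$ from decreasing countable bases of the $G_n$, condition $\dd$ is verified by noting the $n=k$ slice of $W_\beta$ is constant on $I_k(\alpha)$ and hence contained in $D_k(\alpha)$, and part (ii) transfers the $\sigma$-locally finite $k$-networks of the $G_n$ using Theorem \ref{t:IL-properties-compact}(i) and (iii) just as in the paper. Your added detail on the basicity check (absorbing the initial factors into $U_k$ via continuity of multiplication in the metrizable group $G_k$) fills in a step the paper leaves implicit, but it is not a different argument.
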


\begin{proof}
Theorem \ref{t:IL-properties-compact} implies that $\tau_{BS}$ is a group topology. Set $G:=\gglim G_n$.

\smallskip
(i) For every $n\in\w$, let $\{ U_{i,n}: i\in\w\}$ be a decreasing base of open symmetric neighborhoods at the identity $e\in G_n$. For every $\alpha=(\alpha(n))_{n\in\w} \in \w^\w$, set
\begin{equation} \label{equ:IL-5}
W_\alpha := \bigcup_{n\in\w} U_{\alpha(n),n}\cdots U_{\alpha(1),1} \cdot U_{\alpha(0),0}^2 \cdot U_{\alpha(1),1}\cdots U_{\alpha(n),n}.
\end{equation}
Then, by Theorem \ref{t:IL-properties-compact}, the family $\mathcal{W}:=\{ W_\alpha: \alpha\in \w^\w\}$ is a $\GG$-base for $G$. Moreover, for every $k\in\w$ and each $\alpha\in \w^\w$, (\ref{equ:IL-5}) implies
\[
U_{\alpha(n),n}\cdots U_{\alpha(1),1} \cdot U_{\alpha(0),0}^2 \cdot U_{\alpha(1),1}\cdots U_{\alpha(n),n} \subseteq D_k(\alpha).
\]
Hence $ \bigcup_{k\in\w} D_k(\alpha) = W_\alpha$ and therefore $\mathcal{W}$ satisfies $\dd$.


\smallskip
(ii) For every $n\in\w$, $G_n$ being metrizable is an $\aleph$-space. Fix a $\sigma$-locally finite $k$-network $\Nn_n =\bigcup_{k\in\w} \Nn_{k,n}$ for $G_n$. Set $\Nn:= \bigcup_{k,n\in\w} \Nn_{k,n}$.

We claim that $\Nn_{k,n}$ is locally finite in $G$ for every $k,n\in\w$. Indeed, fix arbitrarily $g\in G$. If $g\not\in G_n$, then the closeness of $G_n$ in $G$ (Theorem \ref{t:IL-properties-compact}) implies that there is a neighborhood $U$ of $g$ such that $U\cap N=\emptyset$ for every $N\in \Nn_{k,n}$. Assume that $g\in G_n$. Take a $\tau_n$-open neighborhood $V$ of $g$ such that the family $\{ N\in \Nn_{k,n}: V\cap N\not=\emptyset\}$ is finite. Since $G_n$ is a subgroup of $G$, choose a neighborhood $U$ of $g$ in $G$ such that $U\cap G_n\subseteq V$. It is clear that the family
\[
\{  N\in \Nn_{k,n}: U\cap N\not=\emptyset\} = \{  N\in \Nn_{k,n}: (U\cap G_n)\cap N\not=\emptyset\}
\]
is finite. Therefore $\Nn_{k,n}$ is locally finite.

The claim implies that $\Nn$ is $\sigma$-locally finite. Therefore it remains to prove that $\Nn$ is a $k$-network. Fix a compact subset $K$ of $G$ and an open neighborhood $U$ of $K$. By Theorem \ref{t:IL-properties-compact}, there is $m\in\w$ such that $K\subseteq G_m$. As $U\cap G_m$ is a $\tau_m$-open neighborhood of $K$, there is a finite subfamily $\FF$ of $\Nn_m$ such that $K\subseteq \bigcup \FF \subseteq U\cap K \subseteq U$. Thus $\Nn$ is a $k$-network. 
\end{proof}

Following Michael \cite{Mich}, a topological space $X$ is called an {\em $\aleph_0$-space} if  $X$ is regular and has a countable $k$-network.
\begin{corollary} \label{c:IL-P0-space}
Let  a closed tower $\{ G_n\}_{n\in\w}$ of metrizable groups satisfy $(GC)$.  Then the following assertions are equivalent:
\begin{enumerate}
\item[{\rm (i)}] $\glim G_n$ is Lindel\"{o}f;
\item[{\rm (ii)}] $\glim G_n$ is an $\aleph_0$-space;
\item[{\rm (iii)}] $\glim G_n$ is separable.
\end{enumerate}
\end{corollary}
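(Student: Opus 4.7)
The plan is to close a cycle of implications among (i), (ii), and (iii).

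The implications (ii) $\Rightarrow$ (i) and (ii) $\Rightarrow$ (iii) are standard: a countable $k$-network is also a countable network, which forces Lindel\"{o}fness (every open cover admits a countable subcover via the network) and separability (pick one point from each nonempty network member).

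For (i) $\Rightarrow$ (ii), I would re-examine the $\sigma$-locally finite $k$-network $\mathcal{N}=\bigcup_{n,k}\mathcal{N}_{n,k}$ explicitly built in the proof of Theorem~\ref{t:IS-Pytkeev-G-base}(ii), where each piece $\mathcal{N}_{n,k}$ was shown to be locally finite in the whole space $G$. In any Lindel\"{o}f space, a locally finite family of nonempty sets is countable (cover the space by countably many witness neighborhoods, each meeting only finitely many members of the family). Hence $\mathcal{N}$ is countable and $G$ is an $\aleph_0$-space.

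The crux is (iii) $\Rightarrow$ (ii). The strategy is to show that separability of $G$ descends to each $G_n$; since each $G_n$ is metrizable this forces $G_n$ to be second countable, so it carries a countable $k$-network, and the union over $n$ of such countable $k$-networks is a countable $k$-network for $G$ by the argument used in Theorem~\ref{t:IS-Pytkeev-G-base}(ii). The key lemma for the descent is that a separable topological group has countable cellularity, so no uncountable family of pairwise disjoint nonempty open sets exists in $G$.

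To carry out the descent, assume toward a contradiction that some $G_n$ is not separable. Equip $G_n$ with a left-invariant compatible metric $d_n$ (Birkhoff--Kakutani), and by the usual maximality argument extract $\varepsilon>0$ and $\{x_\alpha\}_{\alpha<\omega_1}\subseteq G_n$ with $d_n(x_\alpha,x_\beta)\geq\varepsilon$ whenever $\alpha\neq\beta$. Left-invariance rephrases this as $x_\alpha x_\beta^{-1}\notin B_{d_n}(e,\varepsilon)$; writing $B_{d_n}(e,\varepsilon)=V\cap G_n$ for some open neighborhood $V$ of $e$ in $G$, and using $x_\alpha x_\beta^{-1}\in G_n$, we obtain $x_\alpha x_\beta^{-1}\notin V$. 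Choosing a symmetric open $W\ni e$ in $G$ with $W\cdot W\subseteq V$, the translates $x_\alpha W$ are pairwise disjoint nonempty open subsets of $G$, contradicting separability. The main subtlety is the transition from metric separation inside $G_n$ to uniform separation inside the ambient group $G$, which is bridged by the subspace relation $B_{d_n}(e,\varepsilon)=V\cap G_n$ together with left-invariance of $d_n$.
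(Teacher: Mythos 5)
Your proof is correct, and for the decisive implication it takes a genuinely different route from the paper. The paper closes the single cycle (i)$\Rightarrow$(iii)$\Rightarrow$(ii)$\Rightarrow$(i), and its step (iii)$\Rightarrow$(ii) is a black-box appeal to Theorem~1.7 of \cite{GK-GMS2}: the $\GG$-base satisfying $\dd$ from Theorem~\ref{t:IS-Pytkeev-G-base}(i) together with separability yields a countable $cp$-network at $e$, whence $G$ is even a $\Pp_0$-space. You instead organize the equivalences around (ii) as a hub and prove (iii)$\Rightarrow$(ii) from scratch: separability gives countable cellularity; a non-separable $G_n$ would produce, via an invariant metric and the fact that $\tau_n=\tau_{BS}|_{G_n}$ (Theorem~\ref{t:IL-properties-compact}(i)) and that $\tau_{BS}$ is a group topology, an uncountable pairwise disjoint family of open translates in $G$; hence every $G_n$ is second countable, and the union of countable bases is a countable $k$-network for $G$ by the same compact-absorption argument (Theorem~\ref{t:IL-properties-compact}(iii)) used in Theorem~\ref{t:IS-Pytkeev-G-base}(ii). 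This is more elementary and self-contained, at the cost of length; the paper's route is shorter and buys the stronger $\Pp_0$ conclusion, while its implication (i)$\Rightarrow$(iii) (closed subspaces of a Lindel\"{o}f space are Lindel\"{o}f, and Lindel\"{o}f metrizable groups are separable) replaces your direct (i)$\Rightarrow$(ii) via ``locally finite plus Lindel\"{o}f implies countable'', which is also sound. One cosmetic slip: for a \emph{left}-invariant metric the $\eps$-separation reads $x_\alpha^{-1}x_\beta\notin B_{d_n}(e,\eps)$ rather than $x_\alpha x_\beta^{-1}\notin B_{d_n}(e,\eps)$; the identity you wrote matches a right-invariant metric and right translates $Wx_\alpha$, and either orientation completes the argument.
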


\begin{proof}
Set $G:=\gglim G_n$.

(i)$\Rightarrow$(iii) For every $n\in\w$, the closed subspace $G_n$ of $G$ (Theorem \ref{t:IL-properties-compact}) is also Lindel\"{o}f. Since $G_n$ is metrizable it is separable (\cite[Theorem~4.1.16]{Eng}). If $D_n$ is a dense subset of $G_n$, it is clear that $\bigcup_{n\in\w} D_n$  is a dense subset of $G$. Thus $G$ is separable.

(iii)$\Rightarrow$(ii) Theorem \ref{t:IS-Pytkeev-G-base}(i) and Theorem 1.7 of \cite{GK-GMS2} imply that the family $\mathcal{D}:=\{ D_k(\alpha): k\in\w, \alpha\in \w^\w\}$ is a countable $cp$-network at the identity $e\in G$ (i.e., $\mathcal{D}$ satisfies the following property: for any subset $A\subseteq G$ with $e\in \overline{A}\setminus A$ and each neighborhood $\mathcal{O}$ of $e$ there is a set $D\in \mathcal{D}$ such that $e\in D \subseteq \mathcal{O}$ and $D\cap A$ is infinite). Thus $G$ is an $\aleph_0$-space by Theorem 1.7 of \cite{GK-GMS2} (which states that $G$ is even a $\Pp_0$-space).

Finally,  (ii) implies (i) since every $\aleph_0$-space is Lindel\"{o}f, see Proposition 3.12 of \cite{GK-GMS1}. 
\end{proof}

To detect when $\gglim G_n$ is an Ascoli space is a much more difficult problem. We shall use the following two assertions.
\begin{proposition}[\cite{Gab-LF}] \label{p:MKw-sequential}
If a topological group $(G,\tau)$ is an $\mathcal{MK}_\omega$-space, then it is either a locally compact metrizable group or is a sequential non-Fr\'{e}chet--Urysohn space.
\end{proposition}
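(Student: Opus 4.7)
The plan is a clean dichotomy argument. By definition, an $\mathcal{MK}_\omega$-space is the direct limit of an increasing tower $K_0\subseteq K_1\subseteq\cdots$ of compact metrizable sets, hence a $k_\omega$-space and therefore sequential; consequently $G$ is automatically sequential. If $G$ fails to be Fr\'{e}chet--Urysohn, case (ii) of the proposition already holds, so the whole content is to show that a Fr\'{e}chet--Urysohn $\mathcal{MK}_\omega$-group is locally compact metrizable. The key reduction is that some $K_m$ must be a neighborhood of the identity $e\in G$: once this is established, $G$ has a compact metrizable neighborhood of $e$, is first countable at $e$ (hence everywhere, by translation), and the Birkhoff--Kakutani theorem delivers metrizability; local compactness is then immediate.

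To produce such a neighborhood, I would argue by contradiction. If no $K_n$ is a neighborhood of $e$, then $e\in\overline{G\setminus K_n}$ for every $n\in\w$, so Fr\'{e}chet--Urysohn yields sequences $(x^{(n)}_k)_k\subseteq G\setminus K_n$ with $x^{(n)}_k\to e$. Because $\{x^{(n)}_k:k\in\w\}\cup\{e\}$ is compact in the $k_\omega$-space $G$, it sits inside some $K_{m(n)}$ with $m(n)>n$. Using the metric on each $K_{m(n)}$ together with a descending sequence of open neighborhoods of $e$ constructed inductively via the Fr\'{e}chet--Urysohn hypothesis, I would diagonalize and select $a_n:=x^{(n)}_{k(n)}$ so that the set $A:=\{a_n:n\in\w\}$ accumulates at $e$ while retaining the escape pattern $a_n\in K_{m(n)}\setminus K_n$. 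A final application of Fr\'{e}chet--Urysohn then produces a sequence inside $A$ converging to $e$; by $k_\omega$-ness this sequence lies inside a single $K_N$, but the containment $a_n\notin K_n$ for all $n$ restricts it to the finite prefix $\{a_0,\dots,a_N\}$, forcing a constant subsequence at some $a_{n_0}$ which would then have to equal $e$ --- contradicting $e\in K_{n_0}$ while $a_{n_0}\notin K_{n_0}$.

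The hard part is the diagonalization. Since there is no single metric on $G$ in which to measure proximity to $e$ uniformly, one cannot simply shrink the $x^{(n)}_k$ arbitrarily close to $e$ in a common sense; instead the $a_n$ must be assembled against an inductively produced decreasing sequence of neighborhoods of $e$, using the metrizability of each $K_{m(n)}$ to speed convergence inside the level $m(n)$ while simultaneously preserving $a_n\notin K_n$. Balancing the requirement that $e\in\overline A$ against the rigid separation pattern coming from the tower is precisely where the $\mathcal{MK}_\omega$-structure and the Fr\'{e}chet--Urysohn hypothesis must be interlocked, and it is the step requiring the most care.
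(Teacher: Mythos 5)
The paper itself offers no proof of this proposition; it is imported verbatim from \cite{Gab-LF}, so the only thing to assess is whether your argument would actually close. Your skeleton is the right one: sequentiality is automatic (a non-closed set already fails to be closed inside some metrizable compact $K_n$ --- note that plain $k_\omega$ would not suffice here, the metrizability of the pieces is what you are really using), and the entire content is that a Fr\'{e}chet--Urysohn $\mathcal{MK}_\omega$-group must have some $K_m$ as a neighborhood of $e$, after which Birkhoff--Kakutani finishes.

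The difficulty you flag at the diagonalization step is not, however, a matter of care: the set you are trying to build cannot exist, and your own final step proves it. If $a_n\notin K_n$ for every $n$, then $A\cap K_N\subseteq\{a_0,\dots,a_{N-1}\}$ is finite, hence closed in $K_N$, for every $N$; so $A$ is closed and discrete in the $k_\omega$-topology of $G$, and $e\in\overline{A}\setminus A$ is unattainable no matter how the $a_n$ are selected. A decreasing sequence of neighborhoods of $e$ cannot rescue this, since such a sequence is a base at $e$ only when $G$ is first countable there --- exactly what is in doubt --- and the metrics on the $K_{m(n)}$ only control convergence inside a single compact piece, while $A$ escapes every piece. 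What the reduction genuinely needs is Nyikos's theorem that every Fr\'{e}chet--Urysohn \emph{topological group} is an $\alpha_4$-space: applied to the sequences $\sigma_n\subseteq G\setminus K_n$ converging to $e$, it directly produces one convergent sequence $b_j\to e$ with $b_j\in\sigma_{n_j}$ for infinitely many distinct $n_j$; its range meets each $K_N$ in a finite set, hence is closed and discrete by the observation above, contradicting $b_j\to e$. This input is irreducibly group-theoretic --- the sequential fan $S_\omega$ is Fr\'{e}chet--Urysohn but not $\alpha_4$, and for it your selection problem is genuinely unsolvable --- so without invoking $\alpha_4$ (or some equivalent use of the multiplication) the passage from ``no $K_m$ is a neighborhood of $e$'' to a contradiction does not go through.
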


\begin{proposition}[\protect{\cite[Theorem~3]{Yamasaki}}] \label{p:IL-open}
Let $\{ (G_n,\tau_n)\}_{n\in \w}$ be a tower of topological groups such that, for some $m\in\w$,  $G_m$ is open in $G_k$  for all $k>m$. Then $\tau_{ind}$ is a group topology on $G:= \bigcup_{n\in\w} G_n$ and $G_m$ is an open subgroup of $(G,\tau_{ind})$.
\end{proposition}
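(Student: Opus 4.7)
The plan is to identify $\tau_{ind}$ with the natural group topology $\tau'$ on $G$ that makes $G_m$ into an open subgroup carrying its original topology $\tau_m$. Explicitly, declare $U \subseteq G$ to be $\tau'$-open if and only if for every $g \in U$ there is a $\tau_m$-open neighborhood $V$ of $e$ in $G_m$ with $gV \subseteq U$. If (a) $\tau'$ is a group topology and (b) $\tau' = \tau_{ind}$, then both conclusions follow at once, since $G_m$ is manifestly $\tau'$-open (take any $V \subseteq G_m$).

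First I would check directly that $G_m$ is $\tau_{ind}$-open: the intersection $G_m \cap G_n$ equals $G_n$ for $n \leq m$ (trivially $\tau_n$-open) and equals $G_m$ for $n > m$ ($\tau_n$-open by hypothesis). Next, to verify that $\tau'$ is a group topology, the delicate axiom is continuity of conjugation at $e$ by an arbitrary $g \in G$: picking $k \geq m$ with $g \in G_k$, conjugation by $g$ is a $\tau_k$-homeomorphism of $G_k$, so $g^{-1}G_m g$ is $\tau_k$-open in $G_k$, whence $H := G_m \cap g^{-1}G_m g$ is $\tau_k$-open and, via the compatibility $\tau_k|_{G_m} = \tau_m$ inherent in the closed-tower framework, also $\tau_m$-open in $G_m$; conjugation by $g$ then restricts to a $\tau_m$-homeomorphism of $H$ onto $gHg^{-1} \subseteq G_m$, which supplies, for any $\tau_m$-neighborhood $U$ of $e$, a $\tau_m$-neighborhood $V \subseteq H$ of $e$ with $gVg^{-1} \subseteq U$. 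The continuity of multiplication and inversion in $\tau'$ then reduces in the usual way to continuity of conjugation together with the fact that $\tau_m$ itself is a group topology on $G_m$. Finally, for $\tau' = \tau_{ind}$: to see $\tau' \subseteq \tau_{ind}$, observe that each basic translate $gV$ with $V$ a $\tau_m$-open neighborhood of $e$ meets every $G_n$ in a $\tau_n$-open set, treating $n \leq m$ via continuity of $G_n \hookrightarrow G_m$ and $n > m$ via the fact that $V$ is itself $\tau_n$-open in $G_n$ (since $G_m$ is $\tau_n$-open and $\tau_n|_{G_m} = \tau_m$); conversely, for $\tau_{ind} \subseteq \tau'$, given $W \in \tau_{ind}$ and $g \in W$, choose $k \geq m$ with $g \in G_k$, extract a $\tau_k$-open neighborhood $V'$ of $e$ with $gV' \subseteq W$, and take $V := V' \cap G_m$, which is the required $\tau_m$-open neighborhood of $e$.

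The main obstacle is verifying conjugation continuity at $e$ by elements $g \notin G_m$: such a $g$ does not stabilize $G_m$, so one must detour through the smaller open subgroup $H = G_m \cap g^{-1}G_m g$. The hypothesis is used precisely here, since openness of $G_m$ in every $G_k$ with $k > m$, together with $\tau_k$-continuity of conjugation, is exactly what forces $H$ to be $\tau_m$-open. Once this local compatibility is in hand, the remaining verifications are straightforward bookkeeping with translates and preimages.
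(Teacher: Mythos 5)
Your argument is correct. Bear in mind, though, that the paper offers no proof of this proposition at all: it is imported verbatim as Theorem~3 of Yamasaki's paper, so there is no internal argument to measure yours against, and your write-up supplies a proof the reader would otherwise have to look up. The route you take is the natural one (and, in substance, the classical one): the $\tau_m$-neighborhood filter of $e$ in $G_m$ satisfies the Hewitt--Ross/Bourbaki axioms for a neighborhood base at the identity of a group topology on all of $G$, the only non-obvious axiom being conjugation-invariance for $g\notin G_m$, which you correctly extract from the hypothesis by observing that $H:=G_m\cap g^{-1}G_m g$ is open in $G_k$ (hence $\tau_m$-open in $G_m$) and that conjugation by $g$ maps $H$ homeomorphically onto an open subset of $G_m$; the identification of the resulting topology $\tau'$ with $\tau_{ind}$ is then routine, and both conclusions drop out at once. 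Two small points. First, the compatibility $\tau_k|_{G_m}=\tau_m$ that you invoke is part of what it means for $\{G_n\}_{n\in\omega}$ to be a tower of topological groups (each $G_n$ is a topological subgroup of $G_{n+1}$); it has nothing to do with the tower being \emph{closed}, and indeed closedness is neither assumed nor needed in this proposition. Second, in checking $\tau'\subseteq\tau_{ind}$ you should note explicitly that $gV\cap G_n=\emptyset$ unless $g\in G_{\max\{m,n\}}$ (because $V\subseteq G_m$, any point of $gV\cap G_n$ forces $g\in G_{\max\{m,n\}}$); this is what legitimately reduces both cases of your verification to translates by elements of the ambient group $G_{\max\{m,n\}}$, where openness is clear.
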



We need the following proposition. 
\begin{proposition} \label{p:inductive-LC-group}
Let $\{ (G_n,\tau_n)\}_{n\in\w}$ be a tower of locally compact groups. Then:
\begin{enumerate}
\item[{\rm (i)}] $\tau_{ind}=\tau_{BS}$ and they are group topologies;
\item[{\rm (ii)}]  the group $G:=(G,\tau_{ind})=\glim G_n$ contains an open $k_\w$-subgroup;
\item[{\rm (iii)}]   if additionally all $G_n$ are metrizable, then one of the following assertions holds:
    \begin{enumerate}
\item[$(iii)_1$] there is an $m\in\w$ such that $G_n$ is open in $G_{n+1}$ for every $n\geq m$, then $G$ is metrizable and locally compact;
\item[$(iii)_2$] $G$ contains an open $\mathcal{MK}_\omega$-subgroup and is a sequential non-Fr\'{e}chet--Urysohn space.
\end{enumerate}
\end{enumerate}
\end{proposition}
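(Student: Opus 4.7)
The plan is first to reduce (i) to the classical Tatsuuma--Shimomura--Hirai result recalled in the introduction: a tower of locally compact groups has $\tau_{ind}$ equal to a group topology. Since $\tau_{gr}$ is by definition the \emph{finest} group topology on $G$ making every inclusion $G_n \to G$ continuous, the group topology $\tau_{ind}$ must satisfy $\tau_{ind}\leq \tau_{gr}$; combined with the general chain $\tau_{gr}\leq \tau_{BS}\leq \tau_{ind}$ from Lemma \ref{l:BS-description}, the three topologies coincide. In particular the analogue of Theorem \ref{t:IL-properties-compact} holds: each $G_n$ is closed in $G$ (locally compact subgroups of Hausdorff groups are closed) and every compact subset of $G$ is contained in some $G_m$.

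For (ii) I would build the open $k_\w$-subgroup directly. Starting from a compact symmetric neighborhood $U_0$ of $e$ in $G_0$, choose inductively a compact symmetric neighborhood $U_{n+1}$ of $e$ in $G_{n+1}$ with $U_n \subseteq U_{n+1}$ (possible because $U_n$ is compact in $G_{n+1}$, so a compact symmetric neighborhood of $U_n$ can be taken). Let $H_n := \bigcup_{k\in\NN} U_n^k$, the open $\sigma$-compact subgroup of $G_n$ generated by $U_n$. Then $H_n \subseteq H_{n+1}$, and the union $H:=\bigcup_n H_n$ meets each $G_n$ in an open set, hence $H$ is a $\tau_{ind}$-open subgroup of $G$. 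Setting $K_n:=U_n^n$ gives an increasing sequence of compact sets with $H=\bigcup_n K_n$. To conclude that $H$ is $k_\w$ one must show that every compact $L\subseteq H$ lies in some $K_n$: the compactness clause from (i) places $L$ in some $G_m$, and then $L\subseteq H\cap G_m$ is a compact subset of the $\sigma$-compact open subgroup $H_m$ of $G_m$, so it is absorbed by a power $U_m^k$ (using that $U_m$ is a neighborhood of $e$ in $H_m$).

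For (iii), metrizability of every $G_n$ makes each $K_n$ a compact \emph{metrizable} subset of $G_n$, so the open subgroup $H$ built in (ii) is an $\mathcal{MK}_\w$-group. Proposition \ref{p:MKw-sequential} now forces a dichotomy. If $H$ is sequential non-Fr\'echet--Urysohn, then since $H$ is open in $G$ the cosets of $H$ partition $G$ into a topological disjoint sum of copies of $H$; topological sums preserve sequentiality, while $H$ sits as a closed subspace of $G$ and already fails the Fr\'echet--Urysohn property, so $G$ is sequential and non-Fr\'echet--Urysohn, giving $(iii)_2$. If instead $H$ is locally compact and metrizable, then $G$ inherits first countability and local compactness from its open subgroup $H$, so by Birkhoff--Kakutani $G$ is metrizable. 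Being locally compact Hausdorff, $G$ is a Baire space, and $G=\bigcup_n G_n$ is a countable union of closed subgroups, so some $G_n$ has nonempty interior. A subgroup of a topological group with nonempty interior is open, so $G_n$ is open in $G$ and hence in every $G_m$ with $m\geq n$; in particular $G_m$ is a neighborhood of $e$ in $G_{m+1}$, placing us in case $(iii)_1$.

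The main obstacle I expect is the $k_\w$-verification inside (ii): the subspace topology on $H$ from $G$ must be shown to coincide with the weak topology determined by $(K_n)$, which requires simultaneous control of all compact subsets of $H$. This is where the compactness conclusion of Theorem \ref{t:IL-properties-compact}(iii) (or its classical locally compact counterpart) is indispensable, together with the fine-tuned choice of $K_n=U_n^n$ that ensures any compact $L\subseteq H\cap G_m$ is genuinely swallowed by a single power of $U_m$.
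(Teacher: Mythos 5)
Your architecture coincides with the paper's: (i) is reduced to the Tatsuuma--Shimomura--Hirai theorem, (ii) is obtained from the union $H=\bigcup_n H_n$ of open $\sigma$-compact subgroups $H_n\leq G_n$ generated by compact symmetric neighborhoods, and (iii) is settled by feeding the open subgroup $H$ into Proposition \ref{p:MKw-sequential}. The one genuine gap is in your verification that $H$ is a $k_\w$-space. You argue that every compact $L\subseteq H$ is absorbed by some $K_n=U_n^n$ and present this as what ``one must show'' to conclude; but hemicompactness (every compact set lies in some member of a fixed increasing sequence of compacta covering the space) does not imply that the topology is the inductive limit topology of that sequence --- one also needs $H$ to be a $k$-space. (A countable non-discrete space all of whose compact subsets are finite, such as the Arens--Fort space, is hemicompact but not $k_\w$.) Your closing paragraph correctly identifies that the subspace topology on $H$ must be shown to agree with the weak topology of $(K_n)$, but the compact-absorption property you invoke there cannot supply this by itself. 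The missing step is available from (i): since $\tau_{ind}$ is the colimit topology and each $G_n$ is a locally compact ($k$-)space, $G$ is a $k$-space and hence so is its open subspace $H$, and a hemicompact $k$-space is $k_\w$; alternatively, as in the paper, one observes that $(H,\tau^H_{ind})$ is the topological inductive limit of the $k_\w$-spaces $H_n$, hence $k_\w$, and that $\tau^H_{ind}=\tau^H_{BS}$ is the topology $H$ inherits from $G$. A smaller slip in the same step: a compact $L\subseteq H\cap G_m$ need not lie in $H_m$ (the intersection $H\cap G_m$ can be strictly larger); you should first absorb $L$ into a single $H_N$ with $N\geq m$, using that the sets $H_n\cap G_m$ form an increasing open cover of $L$ in $G_m$, and only then apply the power-absorption argument inside $H_N$.

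Apart from this, the argument is sound, and your treatment of (iii) diverges mildly from the paper's in an interesting way: the paper splits on whether cofinally many inclusions $G_n\subseteq G_{n+1}$ fail to be open and then rules out local compactness of $H$ by a direct contradiction, whereas you apply the dichotomy of Proposition \ref{p:MKw-sequential} first and recover case $(iii)_1$ by a Baire-category argument ($G$, being locally compact and metrizable, is Baire and is the countable union of the closed subgroups $G_n$, so some $G_n$ is open in $G$ and hence in every later $G_m$). Both routes are valid; yours avoids the separate non-local-compactness contradiction at the cost of invoking Baire category, and it transparently shows that the two alternatives in (iii) are mutually exclusive.
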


\begin{proof}
(i) is Theorem 2.7 of \cite{TSH}.

\smallskip
(ii) For every $n\in\w$, let $W_n$ be an open symmetric neighborhood of $e\in G_n$ with compact closure. Denote by $H_n$ the open $\sigma$-compact subgroup of $G_n$ generated by $W_n$. We can assume that $W_n\subseteq W_{n+1}$ and hence $H_n \subseteq H_{n+1}$ for every $n\in\w$. Note that since locally compact groups are complete, $G_n$ is a closed subgroup of $G_{n+1}$ for every $n\in\w$. Now (i) implies  $H:=(H,\tau^H_{BS})=\gglim H_n$, where $H:=\bigcup_{n\in\w} H_n$ and $\tau_{BS}^H$ is the Bamboo-Shoot topology on $H$. Moreover, the construction of the topologies $\tau^H_{BS}$ and $\tau_{BS}$ shows that $H$ is an open subgroup of $G$. Since all $H_n$ are  $k_\w$-groups, the definition of $\tau_{ind}^H$ implies that $(H,\tau_{ind}^H)$ is also a $k_\w$-space. Finally,  (i) implies $\tau_{ind}^H=\tau^H_{BS}$, and hence $H$ is an open $k_\w$-subgroup of $G$.

\smallskip
(iii) If there is an $m\in\w$ such that $G_n$ is open in $G_{n+1}$ for every $n\geq m$, then $G_m$ is an open subgroup of $G$ by Proposition \ref{p:IL-open}. Thus $G$ is metrizable and locally compact.

Assume that for infinitely many $n\in\w$, the group $G_n$ is not open in $G_{n+1}$. Since all the groups $G_n$ are metrizable, the group $H$ defined in the proof of (ii) is an $\mathcal{MK}_\omega$-space. By Proposition \ref{p:MKw-sequential}, $H$ is either locally compact or is a sequential non-Fr\'{e}chet--Urysohn space. However, $H$ cannot be locally compact since, otherwise, it would have a compact neighborhood $U$ of $e$. But then, by Theorem \ref{t:IL-properties-compact}, $U\subseteq H_k$ for some $k\in\w$. Therefore $H_k$ is an open subgroup of $H_n$ and hence of $G_n$ for all $n\geq k$ which contradicts our assumption on $G_n$. Since  $H$ is an open subgroup of $G$, we obtain that $G$ is also a sequential non-Fr\'{e}chet--Urysohn space. 
\end{proof}

We shall use also the following proposition to show that a space is not Ascoli.
\begin{proposition}[\protect{\cite[Proposition~2.1]{GKP}}] \label{p:Ascoli-sufficient}
Let $X$ be a Tychonoff space. Assume   $X$ admits a  family $\U =\{ U_i : i\in I\}$ of open subsets of $X$, a subset $A=\{ a_i : i\in I\} \subseteq X$ and a point $z\in X$ such that
\begin{enumerate}
\item[{\rm (i)}] $a_i\in U_i$ for every $i\in I$;
\item[{\rm (ii)}] $\big|\{ i\in I: C\cap U_i\not=\emptyset \}\big| <\infty$  for each compact subset $C$ of $X$;
\item[{\rm (iii)}] $z$ is a cluster point of $A$.
\end{enumerate}
Then $X$ is not an Ascoli space.
\end{proposition}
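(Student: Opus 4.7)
The plan is to construct explicitly a compact subset $K \subseteq \CC(X)$ which fails to be equicontinuous at the point $z$, thereby witnessing that $X$ is not an Ascoli space. Using that $X$ is Tychonoff, for each $i \in I$ I choose a continuous function $f_i : X \to [0,1]$ with $f_i(a_i) = 1$ and $f_i \equiv 0$ on $X \setminus U_i$, and set
\[
K := \{0\} \cup \{ f_i : i \in I\}.
\]

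Before verifying the two properties of $K$, I perform a harmless reduction: condition (ii) applied to the compact set $C = \{z\}$ shows that $\{i \in I : z \in U_i\}$ is finite. Discarding these finitely many indices from $I$ does not destroy the hypothesis that $z$ is a cluster point of $\{a_i\}_{i \in I}$ (since $X$ is at least $T_1$), so I may assume $z \notin U_i$ for every $i \in I$; this forces $f_i(z) = 0$ for all $i$.

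Next, I show that $K$ is compact in $\CC(X)$. Take an arbitrary net $(g_\alpha)$ in $K$. If some $f \in K$ is attained cofinally often, then a constant subnet already converges in $K$. Otherwise, for any compact $C \subseteq X$ the set $F_C := \{i \in I : U_i \cap C \neq \emptyset\}$ is finite by (ii), each $f_i$ with $i \notin F_C$ vanishes on $C$ (as does $0$), and the net $(g_\alpha)$ is eventually outside the finite set $\{f_i : i \in F_C\}$; hence $(g_\alpha)$ is eventually identically zero on $C$, which shows $g_\alpha \to 0$ uniformly on each compact set, i.e., in the compact-open topology, and $0 \in K$. For non-equicontinuity at $z$, I assume toward a contradiction that some neighborhood $V$ of $z$ satisfies $|f(x) - f(z)| < 1/2$ for all $f \in K$ and $x \in V$; since $f_i(z) = 0$, this forces $f_i(x) < 1/2$ on $V$, but some $a_i$ lies in $V$ by (iii) and $f_i(a_i) = 1$, a contradiction.

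The main obstacle I anticipate is verifying that $K$ is genuinely compact in the compact-open topology rather than merely countably compact or sequentially compact: one must handle arbitrary nets (or equivalently arbitrary ultrafilters), and $K$ itself may be highly uncountable. The crucial observation that makes the argument succeed is precisely condition (ii), which ensures that on any individual compact subset of $X$ only finitely many of the $f_i$'s are nonzero, so any subnet that avoids each individual element of $K$ cofinally must be forced, compact set by compact set, to become identically zero and therefore to converge to $0$ in $\CC(X)$.
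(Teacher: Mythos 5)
The paper does not prove this proposition; it is imported verbatim from \cite[Proposition~2.1]{GKP}, whose proof is exactly the construction you give: Tychonoff bump functions $f_i$ supported in $U_i$ with $f_i(a_i)=1$, the observation that condition (ii) makes $\{f_i\}\cup\{0\}$ compact in $\CC(X)$ (every compact-open neighborhood of $0$ absorbs all but finitely many $f_i$), and failure of equicontinuity at $z$ via (iii). Your argument is correct, including the preliminary reduction to $z\notin U_i$ and the net-based verification of compactness, so there is nothing to fix.
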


As we mentioned in the introduction, Yamasaki \cite{Yamasaki} showed that if a closed tower $\{ G_n\}_{n\in\w}$ of metrizable groups is such that $\tau_{ind}$ is a group topology, then there is $m\in\w$  such that, for all $n>m$, either $G_m$ is open in $G_n$  or $G_n$ is locally compact. Below we include a partial result of Yamasaki's theorem because it easily follows from the proof of the proposition.
\begin{proposition} \label{p:IL-Ascoli-inductive}
Let  a closed tower $\{ (G_n,\tau_n)\}_{n\in\w}$ of metrizable groups satisfy $(GC)$.
If $(G,\tau_{BS})$ is an Ascoli space or $\tau_{ind}$ is a group topology, then all the groups $G_n$ are locally compact.
\end{proposition}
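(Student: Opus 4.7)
The plan is to prove the contrapositive of each conclusion: assuming some $G_{n_0}$ fails to be locally compact, I show that $\tau_{ind}$ is not a group topology and $(G,\tau_{BS})$ is not an Ascoli space. The starting observation is upward propagation of non-local-compactness through the closed tower: for every $n\geq n_0$, the subgroup $G_{n_0}$ is $\tau_n$-closed in $G_n$ by Theorem \ref{t:IL-properties-compact}(i), and a closed subgroup of a locally compact group is locally compact, so $G_n$ cannot be locally compact. I also reduce to the nontrivial situation where $G_n$ is not open in $G_{n+1}$ for infinitely many $n$; the complementary case is case (i) of Theorem \ref{t:IS-Pytkeev-G-base}, where $G$ becomes an open $G_m$-coset structure and the statement is essentially vacuous past the stabilization level $m$.

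For the Ascoli part, the plan is to apply Proposition \ref{p:Ascoli-sufficient} with $z=e$. For each such $n$, the density of $G_{n+1}\setminus G_n$ around $e$ (since $G_n$ is closed but not open in $G_{n+1}$) combined with non-total-boundedness of neighborhoods of $e$ in $G_{n+1}$ (from non-local-compactness) lets me select an $\varepsilon_n$-separated sequence $\{a_{n,k}\}_{k\in\w}\subseteq G_{n+1}\setminus G_n$ inside a small metric ball around $e$ in $G_{n+1}$. Because $G_n$ is $\tau_{BS}$-closed (Theorem \ref{t:IL-properties-compact}), each $a_{n,k}$ admits a $\tau_{BS}$-open neighborhood $U_{n,k}\ni a_{n,k}$ with $U_{n,k}\cap G_n=\emptyset$. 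Condition (ii) of Proposition \ref{p:Ascoli-sufficient} follows at once from Theorem \ref{t:IL-properties-compact}(iii): every compact $K\subseteq G$ lies in some $G_m$, and $U_{n,k}\cap K\subseteq U_{n,k}\cap G_n=\emptyset$ whenever $n\geq m$. Condition (iii), that $e$ is a $\tau_{BS}$-cluster point of the family, relies on the inclusion $U_{n+1}\subseteq U(n+1,0)\subseteq \mathrm{U}[0]$: given any Bamboo-Shoot neighborhood determined by $(U_j)_{j\in\w}\in\mathfrak{N}$, the richness of the family furnished by non-local-compactness forces some $a_{n,k}$ into $U_{n+1}$, hence into $\mathrm{U}[0]$.

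For the $\tau_{ind}$-part, I would mimic Yamasaki's original argument: using the same non-locally-compact picture together with infinitely many non-open embeddings, construct a $\tau_{ind}$-open symmetric neighborhood $V$ of $e$ such that $V\cdot V$ fails to be a $\tau_{ind}$-neighborhood of $e$. The set $V$ is assembled layer by layer so that its trace on each $G_n$ is a $\tau_n$-open neighborhood of $e$, while $\varepsilon$-separated subsets of neighborhoods of $e$ in $G_n$ (which exist precisely because $G_n$ is not totally bounded at $e$) witness that $V\cdot V$ does not absorb any $\tau_n$-open set around $e$; since $\tau_{ind}|_{G_n}=\tau_n$, this contradicts $\tau_{ind}$ being a group topology. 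The main obstacle in both parts is the cluster-point / absorption condition: verifying it against an adversarially chosen BS-neighborhood or $\tau_{ind}$-neighborhood requires exploiting exactly the ``adversarial freedom'' that non-total-boundedness of neighborhoods of $e$ in each $G_n$ supplies, rather than a naive metric-diagonal estimate which is ruled out by the failure of first countability of $\tau_{BS}$.
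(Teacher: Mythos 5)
Your high-level frame --- a contradiction via Proposition \ref{p:Ascoli-sufficient} with $z=e$, points $a_{n,k}$ lying in $G_{n+1}\setminus G_n$ with neighborhoods $U_{n,k}$ missing the lower group, and Theorem \ref{t:IL-properties-compact}(iii) to confine compact sets to some $G_m$ --- is exactly the paper's, and your preliminary reductions (upward propagation of non-local-compactness, restriction to infinitely many non-open inclusions) are consistent with how the proposition is actually invoked in Theorem \ref{t:Ascoli-inductive-1}. But the two verifications you need are not supplied, and the second one fails for the construction you describe. Condition (ii) does not ``follow at once'': the inclusion $U_{n,k}\cap K\subseteq U_{n,k}\cap G_n=\emptyset$ only disposes of the layers $n\geq m$, and for each of the finitely many layers $n<m$ there remain infinitely many sets $U_{n,k}$, all meeting $G_m$. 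Showing that only finitely many of these meet $K$ is where the paper does most of its work: the first factors $x_{n,k}$ are taken from a locally finite family of open sets inside the non-compact (hence non-pseudocompact) annuli $\overline{B_{n_k,0}}\setminus B_{n_k+1,0}$ of $G_0$, and the shrinking conditions $(\alpha)$ on the $V_{n,k}$ are imposed precisely so that each family $\mathcal{V}_k$ is locally finite in $G_k$ (Cases 2.1 and 2.2). An $\eps_n$-separated choice of points controls the points themselves, not the open sets around them, and not their traces on the larger groups $G_m$.

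Condition (iii) is the crux, and your mechanism for it does not work. For fixed $n$ your set $\{a_{n,k}\}_{k\in\w}$ is $\eps_n$-separated in $G_{n+1}$, hence closed and discrete there and missing $e$; so the adversary can choose $U_{n+1}$ --- and, using the telescoping trick $U_{j+1}^2\cap G_j\subseteq U_j$ from the proof of Theorem \ref{t:IL-properties-compact}(i), even the whole trace $\mathrm{U}[0]\cap G_{n+1}$ --- to avoid it. So ``some $a_{n,k}$ is forced into $U_{n+1}$'' is exactly what fails. There is a genuine tension between (ii) (the points must be spread out within each layer) and (iii) (they must cluster at $e$), and the paper resolves it by the double-indexed product $a_{n,k}=x_{n,k-1}\cdot y_{n,k}$: for fixed annulus level the $x$'s run along a locally finite family (good for (ii)), while letting the annulus level grow drives $x_{n,k-1}$ into any prescribed $U$ and then $n\to\infty$ drives $y_{n,k}\to e$, giving $a_{n,k}\in U^2\subseteq V$ (good for (iii)). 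Your single-ball-per-layer construction has no analogue of this two-directional convergence, and you acknowledge the obstacle without closing it. Finally, for the $\tau_{ind}$ half you only announce a plan (``mimic Yamasaki''); the paper gets this part almost for free from the same set $A$: each $A\cap G_k$ is closed in $G_k$, so $A$ is $\tau_{ind}$-closed and $e\notin\overline{A}^{\,\tau_{ind}}$, while $e\in\overline{A}^{\,\tau_{BS}}$, and Lemma \ref{l:BS-description}(ii) then rules out $\tau_{ind}$ being a group topology.
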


\begin{proof}
Suppose for a contradiction that there is $G_i$, say $G_0$, which is not locally compact. Denote by $e$ the identity of $G$ (and hence all the $G_n$'s). For every $i\in\w$ we denote by $\rho_i$ a left invariant metric on $G_i$ and set
\[
B_{n,i} := \{ x\in G_i: \rho_i (x,e)<2^{-n} \}, \quad n\in\w.
\]

{\em Step 1.} Consider the open base of neighborhoods $\{ B_{n,0} \}_{n\in\w}$ of the unit $e$ of $G_0$, so $\overline{B_{n+1,0}}\subseteq B_{n,0}$. Then there is a strictly increasing sequence $\{n_k\}_{k\in \w}$ such that $n_{k+1}>n_k+1$ and for every $k\in\w$, the set $\overline{B_{n_k,0}}\setminus B_{n_k+1,0}$ is not compact. Indeed, otherwise, there would exist an $n_0\in\w$ such that $\overline{B_{n_0,0}}\setminus B_{n,0}$ is compact for all $n> n_0$. Since $B_{n,0}$ converges to $e$ (i.e., each neighborhood of $e$ contains all but finitely many of $B_{n,0}$'s), we obtain that $\overline{B_{n_0,0}}$ is compact. So $G_0$ is locally compact, a contradiction.

Set $P_k:=\overline{B_{n_k,0}}\setminus B_{n_k+1,0}$. Then $P_k$ is metrizable and non-compact, and hence $P_k$ is not pseudocompact (see \cite[4.1.17]{Eng}). By \cite[Theorem~3.10.22]{Eng},  there exists a locally finite collection $\{W_{n,k}\}_{n\in \w}$ of nonempty open subsets  of $P_k$. Since $G_0$ being non-locally compact is not discrete, any $W_{n,k}$ contains a point from $B_{n_k,0}\setminus \overline{B_{n_k+1,0}}$. Therefore we can assume in addition that  $\overline{W_{n,k}}\subseteq \mathrm{Int}(P_k)$ for every $k\in\w$. As $n_{k+1}>n_k+1$, it is easy to see that, for every $m\in\w$,  the family
\[
\mathcal{W}_m := \{ W_{n,i}: n\in\w, i\leq m\}
\]
is also locally finite in $X_0$. For every $n,k\in\w$, pick arbitrarily a point $x_{n,k}\in W_{n,k}$.

\medskip
{\em Step 2.} We claim that for every $k\geq 1$ there are
\begin{enumerate}
\item[{\rm (a)}] a one-to-one sequence $\{ y_{n,k}\}_{n\in\w}$ in $G_k\setminus G_{k-1}$ converging to the unit $e\in G$;
\item[{\rm (b)}] for every $n\in\w$, an open neighborhood $U_{n,k}$ of $a_{n,k} :=x_{n,k-1}\cdot  y_{n,k}$ in $G$;
\end{enumerate}
such that
\begin{enumerate}
\item[{\rm (c)}] $U_{n,k}\cap G_{k-1} =\emptyset$ for every $n\in\w$;
\item[{\rm (d)}] the family
\[
\mathcal{V}_k := \{ U_{n,i} \cap X_k: \; n\in\w, \, 1\leq i\leq k\}
\]
is locally finite in $G_k$.
\end{enumerate}

Indeed, for every $k\geq 1$, let $\{ y_{n,k}\}_{n\in\w}$ be an arbitrary one-to-one sequence in $G_k\setminus G_{k-1}$  converging to $e$ (such a sequence exists because $G_{k-1}$ is not open in $G_k$). For every $k\geq 1$ and each $n\in\w$,   choose an open symmetric neighborhood $V_{n,k}$ of $e$ in $G$ such that (recall that $G_{k-1}$ is a closed subgroup of $G$ by (ii) of Theorem \ref{t:IL-properties-compact})
\begin{enumerate}
\item[$(\alpha)$] $V_{n,k}^3 \cap G_i \subseteq B_{n,i}$ for every $0\leq i\leq n$;
\item[$(\beta)$] $\big( y_{n,k} \cdot V_{n,k}^3\big) \cap G_{k-1} =\emptyset$.
\end{enumerate}
For every $k\geq 1$ and each $n\in\w$,   set
\[
a_{n,k}:= x_{n,k-1}\cdot  y_{n,k} \; \mbox{ and }\; U_{n,k} :=a_{n,k} V_{n,k}.
\]
Clearly, (a) and (b) are fulfilled.  Also (c) is  fulfilled since if $U_{n,k}\cap G_{k-1} \not=\emptyset$ for some $k\geq 1$  and $n\in\w$, then $x_{n,k-1} y_{n,k} v \in G_{k-1}$ for some $v\in V_{n,k}$. So $y_{n,k} v \in x_{n,k-1}^{-1} G_{k-1} =G_{k-1}$ that contradicts $(\beta)$.

\medskip

Let us check (d). Fix $k\geq 1$ and $x\in G_k$. Consider the two possible cases.

\medskip
{\em Case 2.1. Let $x\in G_k\setminus G_{0}$.} Then $\rho_k(x, G_{0})>0$ since $G_0$ is closed in $G_k$. For every $1\leq i\leq k$, since $y_{n,i} \to e$ in $G$ and $x_{n,i-1}\in G_0$, the condition $(\alpha)$ implies $V_{n,i} \cap G_k \subseteq B_{n,k}$ for every $n >k$. Therefore, for every $1\leq i\leq k$, we obtain
\[
\begin{split}
\lim_n \rho_k \big( U_{n,i} \cap G_k, G_0\big) & =\lim_n \rho_k \big( y_{n,i}V_{n,i} \cap G_k, G_0\big) \quad (\mbox{since } y_{n,i}\in G_k) \\
& =\lim_n \rho_k \big( y_{n,i}(V_{n,i} \cap G_k), G_0\big) \leq \lim_n \rho_k \big( y_{n,i}B_{n,k}, e\big)= 0.
\end{split}
\]
Hence there is an open neighborhood $U_x$ of $x$ in $G$ such that $U_x\cap G_k$ intersects only a finite subfamily of $\mathcal{V}_k$.

\medskip
{\em Case 2.2. Let $x\in G_0$.} Choose an open symmetric neighborhood $U_x$ of $e$ in $G$ such that $xU_x^3 \cap G_0$ intersects only with a finite subfamily of $\mathcal{W}_k$. We claim that $xU_x \cap G_k$ intersects only a finite subfamily of $\mathcal{V}_k$. Indeed, assuming the converse we can find $1\leq i\leq k$ such that
\[
(xU_x \cap G_k) \cap (U_{n,i}\cap G_k)\not=\emptyset
\]
for every $n\in I$, where $I$ is an infinite subset of $\w$. Then for every $n\in I$ there are $u_n\in G_k$, $t_n\in U_x$ and $z_n\in V_{n,i}\cap G_k$ such that
\[
u_n =x\cdot t_n = x_{n,i-1} y_{n,i} z_n.
\]
Note that, by $(\alpha)$, $z_n= y_{n,i}^{-1} x_{n,i-1}^{-1} u_n \in V_{n,i}\cap G_k$ belongs to $U_x \cap G_k$ for all sufficiently large $n\in I$, and also $y_{n,i}\in U_x \cap G_k$ for all sufficiently large $n\in I$ because $y_{n,i}\to e$. So
\[
x_{n,i-1} = x\cdot \left( t_n  z_n^{-1} y_{n,i}^{-1}\right) \in \big( x U_x^3 \cap G_0\big) \cap W_{n,i-1}
\]
for all sufficiently large $n\in I$. But this contradicts the choice of $U_x$.

Cases 2.1 and 2.2 show that $\mathcal{V}_{k}$  is locally finite in $G_{k}$.

\medskip
{\em Step 3. Assume that $(G,\tau_{BS})$ is an Ascoli space.} To get a contradiction with the assumption that $G_0$ is not locally compact we show that the families
\[
A:=\{ a_{i,k}: i\in\w, k\geq 1\}, \quad \U :=\{ U_{i,k}: i\in\w, k\geq 1\}
\]
and $z:=e$ satisfy (i)-(iii) of Proposition \ref{p:Ascoli-sufficient}.

\smallskip
Indeed, (i) is clear. To check (ii) let $K$ be a compact subset of $G$. By Theorem \ref{t:IL-properties-compact}, there is an $m\in\w$ such that $K\subseteq G_m$. So (c) implies that if $K\cap U_{n,i}\not=\emptyset$, then  $i\leq m$, and hence $U_{n,i}\cap G_m \in \mathcal{V}_m$. Since the family $\mathcal{V}_m$ is locally finite in $G_m$, we obtain that $K$ intersects only a finite subfamily of $\U$ that proves (ii).

To prove (iii) let $V$ be an open neighborhood of $e$ in $G$. Take an open neighborhood $U$ of $e$ such that $U^2\subseteq V$, and choose $k_0\in\w$ such that $W_{i,k_0}\subseteq G_0 \cap U$ for every $i\in\w$.  So $x_{i,k_0}\in U$ for every $i\in\w$. Since $\lim_i y_{i,k_0+1} =e$ we obtain that $a_{i,k_0+1}=x_{i,k_0} y_{i,k_0+1} \in U\cdot U\subseteq V$ for all sufficiently large $i$. Thus $e\in \overline{A}$ and (iii) holds. Finally, Proposition \ref{p:Ascoli-sufficient} implies that the group $X$ is not Ascoli which is a desired contradiction.

\medskip
{\em Step 4. Assume that $\tau_{ind}=\tau_{BS}$.} To get a contradiction with the assumption that $G_0$ is not locally compact, it is sufficient to show that  $e\not\in \overline{A}^{\,\tau_{ind}}$. Indeed, by Step 3, we know that $e\in \overline{A}^{\,\tau_{BS}}$, and then (ii) of Lemma \ref{l:BS-description} shows that  $\tau_{ind}$ is not a group topology.

To prove that $e\not\in \overline{A}^{\,\tau_{ind}}$, for every $k\geq 1$, set $A_k :=\{ a_{i,k}=x_{i,k-1}\cdot  y_{i,k}:  i\in\w\}$. Since the sequence $\{ x_{i,k-1}: i\in\w\}$ is closed in $G_0$ and $\lim_i y_{i,k} =e$ we obtain that $A_k$ is closed in $G_k$. Now (a) implies that
\[
A\cap G_k =(A_1 \cup\cdots \cup A_k) \cap G_k
\]
is closed in $G_k$ for every $k\geq 1$. Hence $A$ is closed in $\tau_{ind}$. Thus $e\not\in \overline{A}^{\,\tau_{ind}}$. 
\end{proof}

\begin{theorem} \label{t:Ascoli-inductive-1}
Let  a closed tower  $\{ G_n\}_{n\in\w}$ of metrizable groups satisfy $(GC)$. Set $G:=\glim G_n$. Then the following assertions are equivalent:
\begin{enumerate}
\item[{\rm (i)}] $G$ is an Ascoli space;
\item[{\rm (ii)}] $\tau_{ind}$ is a group topology on $G$;
\item[{\rm (iii)}] one of the following assertions holds:
\begin{enumerate}
\item[$(iii)_1$] there is an $m\in\w$ such that $G_n$ is open in $G_{n+1}$ for every $n\geq m$, so $G$ is metrizable;
\item[$(iii)_2$] all the groups $G_n$ are locally compact, so $G$ contains an open $\mathcal{MK}_\omega$-subgroup and is a sequential non-Fr\'{e}chet--Urysohn space.
\end{enumerate}
\end{enumerate}
\end{theorem}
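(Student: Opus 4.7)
The plan is to run the equivalences (i)$\Leftrightarrow$(ii)$\Leftrightarrow$(iii) by using Propositions \ref{p:IL-Ascoli-inductive}, \ref{p:inductive-LC-group}, \ref{p:IL-open}, and \ref{p:MKw-sequential} as modular ingredients; almost all the real analytic work has already been done there, and what remains is a careful bookkeeping argument that assembles these pieces in the right order. The main obstacle will actually be the matching of cases: Proposition \ref{p:inductive-LC-group}(iii) dichotomizes a locally compact tower into an ``eventually open'' case and an $\mathcal{MK}_\omega$ case, and I need to verify that this dichotomy lines up cleanly with the statements $(iii)_1$ and $(iii)_2$ of the theorem.

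First I would handle (i)$\Rightarrow$(iii) and (ii)$\Rightarrow$(iii) together. Assuming either $(G,\tau_{BS})$ is Ascoli or $\tau_{ind}$ is a group topology, Proposition \ref{p:IL-Ascoli-inductive} forces every $G_n$ to be locally compact. The tower is therefore a closed tower of locally compact metrizable groups, so Proposition \ref{p:inductive-LC-group}(iii) applies and gives two alternatives: either there is an $m\in\w$ with $G_n$ open in $G_{n+1}$ for every $n\geq m$, which is exactly $(iii)_1$, or $G$ contains an open $\mathcal{MK}_\omega$-subgroup and is sequential non-Fr\'{e}chet--Urysohn, which combined with the local compactness of every $G_n$ is $(iii)_2$.

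Next I would prove $(iii)_1\Rightarrow$(i)$\wedge$(ii). From Proposition \ref{p:IL-open}, $\tau_{ind}$ is a group topology on $G$ and $G_m$ is open in $(G,\tau_{ind})$, yielding (ii). Since $G_m$ is metrizable and open, the whole group $G$ inherits a first countable group topology at the identity, hence is metrizable; every metrizable space is a $k$-space, hence Ascoli, so (i) follows.

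Finally I would prove $(iii)_2\Rightarrow$(i)$\wedge$(ii). Proposition \ref{p:inductive-LC-group}(i) gives $\tau_{ind}=\tau_{BS}$ as a group topology, which is (ii). For (i), Proposition \ref{p:inductive-LC-group}(iii) splits the analysis once more: either we fall into the subcase $(iii)_1$ (already handled, producing a metrizable and therefore Ascoli group), or $G$ is sequential, in which case $G$ is a $k$-space and hence Ascoli by the classical Ascoli theorem. In either subcase the open $\mathcal{MK}_\omega$-subgroup provided by Proposition \ref{p:inductive-LC-group}(ii)--(iii) together with Proposition \ref{p:MKw-sequential} justifies the structural description claimed in $(iii)_2$, and the proof is complete.
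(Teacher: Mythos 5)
Your assembly of the lemmas is very close to the paper's own proof; the directions $(iii)\Rightarrow$(i),(ii) are handled exactly as in the paper (Proposition \ref{p:IL-open} and metrizability for $(iii)_1$, Proposition \ref{p:inductive-LC-group} and sequential $\Rightarrow$ $k$-space $\Rightarrow$ Ascoli for $(iii)_2$). The one substantive divergence is in (i),(ii)$\Rightarrow$(iii), and it is the weak spot. You apply Proposition \ref{p:IL-Ascoli-inductive} \emph{unconditionally} to conclude that every $G_n$ is locally compact, and only afterwards split into cases via Proposition \ref{p:inductive-LC-group}(iii). The paper does it in the opposite order: it first disposes of the case where some $G_m$ is eventually open in the tower (there Proposition \ref{p:IL-open} gives metrizability directly, with no appeal to local compactness), and only in the complementary case --- after passing WLOG to a tower in which $G_n$ is \emph{not} open in $G_{n+1}$ for any $n$ --- does it invoke Proposition \ref{p:IL-Ascoli-inductive}.

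This ordering is not cosmetic. Although Proposition \ref{p:IL-Ascoli-inductive} is stated without any non-openness hypothesis, its proof needs one: Step 2 begins by choosing, for every $k\geq 1$, a one-to-one sequence $\{y_{n,k}\}$ in $G_k\setminus G_{k-1}$ converging to $e$, which exists precisely ``because $G_{k-1}$ is not open in $G_k$.'' Without that assumption the conclusion is simply false: take the constant tower $G_n=\ell^2$ for all $n$. It is closed, satisfies $(GC)$, and $G=\ell^2$ is metrizable, hence Ascoli, and $\tau_{ind}$ is a group topology --- yet no $G_n$ is locally compact. So the step ``(i) or (ii) forces every $G_n$ to be locally compact'' fails in general, and with it your subsequent appeal to Proposition \ref{p:inductive-LC-group}(iii). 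The repair is exactly the paper's bookkeeping: first check whether the tower is eventually open (if so you land in $(iii)_1$ immediately), and only in the remaining case, after relabelling so that no $G_n$ is open in $G_{n+1}$, deduce local compactness and hence $(iii)_2$.
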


\begin{proof}
(i)$\Rightarrow$(iii) and (ii)$\Rightarrow$(iii): If there is an $m\in\w$ such that $G_n$ is open in $G_{n+1}$ for every $n\geq m$, then $G_m$ is an open subgroup of $G$ by Proposition \ref{p:IL-open}. Thus $G$ is metrizable.

Assume that for infinitely many $n\in\w$ the group $G_n$ is not open in $G_{n+1}$. Without loss of generality we can assume that $G_n$ is not open in $G_{n+1}$ for all $n\in\w$. Then Proposition \ref{p:IL-Ascoli-inductive} imply that all the groups $G_n$ are locally compact. Our assumption on $\{ G_n\}$ and Proposition \ref{p:inductive-LC-group} imply that $G$ contains an open $\mathcal{MK}_\omega$-subgroup and is a sequential non-Fr\'{e}chet--Urysohn space.

(iii)$\Rightarrow$(i) follows from the Ascoli theorem \cite[Theorem~3.4.20]{Eng}, and (iii)$\Rightarrow$(ii) follows from Propositions \ref{p:IL-open} and \ref{p:inductive-LC-group}. 
\end{proof}



\bibliographystyle{amsplain}

\end{document}